\documentclass[final]{siamltex}

\usepackage{epsf,latexsym,amsfonts,amssymb,subfigure,mathrsfs,graphicx,stmaryrd,amsmath}
\usepackage[notref,notcite]{showkeys}
\usepackage{color}%\usepackage{graphicx}
\usepackage[notref,notcite]{showkeys}

\newtheorem{remark}[theorem]{Remark}
\numberwithin{equation}{section}
\newcommand{\abs}[1]{\lvert#1\rvert}
\newcommand{\labs}[1]{\left\lvert\,#1\,\right\rvert}

\newcommand{\Lr}[1]{\left(#1\right)}

\newcommand{\integer}[1]{\lfloor\,#1\,\rfloor}
\newcommand{\fraction}[1]{\{\,#1\,\}}
\newcommand{\mc}[1]{\mathcal #1}
\newcommand{\mb}[1]{\mathbb #1}
\def\negint{{\int\negthickspace\negthickspace\negthickspace
\negthinspace -}}
\newcommand{\wt}[1]{\widetilde{#1}}

\def\Lqc{\mc{L}_{\text{qc}}}
\def\Lat{\mc{L}_{\text{at}}}
\def\Lcb{\mc{L}_{\text{CB}}}

\def\al{\alpha}
\def\del{\delta}
\def\eps{\varepsilon}
\def\ga{\gamma}
\def\ka{\kappa}
\def\lam{\lambda}
\def\om{\omega}

\def\md{\mathrm d}
\def\dx{\,\md x}
\def\dy{\,\md y}
\def\I{\imath}
\def\nn{\nonumber}
\def\Del{\Delta}
\def\vphi{\varphi}

\title{On the effect of ghost force in the quasicontinuum method: Dynamic problems in one dimension}
\author{Xiantao Li
\thanks{Department of Mathematics, the Pennsylvania State University, University Park,
Pennsylvania, 16802, U.S.A. ({\tt xli@math.psu.edu})
{The work of Li was supported by National Natural Science
              Foundation grant DMS1016582.}}
\and{Pingbing Ming}
\thanks{LSEC, Institute of Computational
  Mathematics and Scientific/Engineering Computing,
  AMSS, Chinese Academy of Sciences,
  No. 55, Zhong-Guan-Cun East Road,
  Beijing 100190, China. ({\tt mpb@lsec.cc.ac.cn}) {The work of Ming was supported by National Natural Science Foundation of China grant 10932011, and by the funds from Creative Research Groups of China through grant 11021101, and by the support of CAS National Center for Mathematics and Interdisciplinary Sciences.}}}
%\thanks{}
\begin{document}

\maketitle

\begin{abstract}
Numerical error induced by the ``ghost forces" in the quasicontinuum method is studied in the context of dynamic problems. The error in the \({ W}^{1,\infty}\) norm is analyzed for the time scale \(\mc{O}(\eps)\) and the time scale \(\mc{O}(1)\) with $\eps$ being the lattice spacing.
\end{abstract}

\begin{keywords}
Quasicontinuum methods; Ghost force; Dynamic problems.
\end{keywords}

\begin{AMS}
 65N15; 74G15; 70E55.
\end{AMS}
\section{Introduction}

The present paper is mainly concerned with the error produced by the ghost forces in
quasicontinuum (QC) type of multiscale
coupling methods for crystalline solids. In these methods, one reduces the degrees of freedom of an atomic level description by replacing part of the system with
continuum mechanics models~\cite{TadmorOrtizPhillips:1996, BeXi03, KnapOrtiz:2001, MillerTadmor:2009, ShenoyMillerTadmorPhillipsOrtiz:1999}.
Such integrated methods have been very useful in studying mechanical
properties of lattice defects. It allows one to simulate a relatively large system while still able to keep
the atomistic description around critical areas, such as crack tips and dislocations cores.
These methods have also drawn a great deal of attention from numerical analysts. We refer to~\cite{Lin03, EMing:2005, ELuYang:2006, MingYang:2009, DobsonLuskin:2009a,
DobsonLuskinOrtner:2010a, Shapeev:2011, Ortner} and references therein for
a partial list of the representative work.
Nevertheless, many challenges in the analysis
of these methods still remain. Examples include high-dimensional
problems, systems with line or wall defects, and solutions near bifurcation points.
We refer to~\cite {LuMing:2011, OrtnerShapeev}
for a review of the state-of-art of this field. A critical issue that arises in the numerical
analysis is the {\em ghost force},  which is the non-zero forces on the atoms near the interface at the equilibrium state~\cite{ShenoyMillerTadmorPhillipsOrtiz:1999}.
For statics problems, the elimination of ghost forces has been a necessary ingredient to achieve
uniform accuracy~\cite{ELuYang:2006, Ortner}.

In the static case, the QC method couples a molecular statics model
to the Cauchy-Born elasticity model.
For one-dimensional models, the influence of ghost forces has been explicitly
characterized in~\cite{Ming:2008,MingYang:2009, DobsonLuskin:2009a}. They
found that the ghost force may lead to an $\mc{O}(1)$ error for the gradient of
the solution, and  the width of the resulting interfacial layer is of
the size $\mc{O}(\eps\abs{\ln\eps})$,
where $\eps$ is the equilibrium bond length.
The influence of the ghost force for a two-dimensional model with planar interface has recently been studied in~\cite {ChenMing:2012}.
It was found that the ghost forces still lead to an $\mc{O}(1)$ error for the gradient of
the solution, while the interfacial layer caused by the ghost force is of the size $\mc{O}(\sqrt\eps)$,
which is much wider than that of the one-dimensional problems.

The QC method can be extended to dynamic problems using the
coarse-grained energy and the Hamilton's principle
\cite{Shenoy1999,Rodney03}. { The dynamic QC method}
couples an elastodynamics
model with a molecular dynamics model. Many dynamic coupling methods with similar goals have been developed~\cite{AbBrBe98, BeXi03, EHu02, EHu01,ELi05, LiYaE09, TaHoLi06, TaHoLi06b, WaLi03, XiBe04} ever since.
However, very little has been done to address the stability and accuracy
of these methods.
 Most numerical studies have been focused on the
artificial reflections at the interface. The reflection is caused by the drastic
change in the
dispersion relation across the interface, which is often due to the
difference between the mesh size in the continuum region and the lattice
spacing in the molecular dynamics model. The reflection can be studied by considering an incident wave
packet traveling toward the interface and examine the amplitude of the reflected waves. The issue of ghost forces, however, has not yet been addressed.

The purpose of this paper is to study the effect of ghost forces in the
context of {\it dynamic problems}. Motivated by the results in the static case, we
expect that ghost forces will continue to play an important role in
dynamic coupling models. To focus primarily on the issue of ghost forces, we
consider the dynamic model~\cite{Shenoy1999,Rodney03} derived from the original QC method when the mesh size coincides with the lattice spacing.
In addition, the initial displacement is given by a uniform deformation. This
allows us to compute the error caused only by the ghost forces.
The error will be studied in the \(W^{1,\infty}\)-norm as in the static problem~\cite{Ming:2008,DobsonLuskin:2009a, MingYang:2009, ChenMing:2012}.
%since error measured by norms with \(p<\infty\) is often insufficient. 
Our study shows that the error, which is
initially zero, grows very quickly, and already becomes $\mc{O}(1)$ at the
time scale \(\mc{O}(\eps)\) . The error exhibits fast oscillations,
with amplitude on the order of
\(\eps\). On the time scale \(\mc{O}(1)\), which is typically the time
scale of interest, the amplitude of the oscillations grows, and it is
bounded by an \(\mc{O}(\sqrt\eps)\) quantity. The average of the
oscillations has a peak at the interface. In contrast to the static
case, where the error is mainly concentrated at the interface, the error
in the dynamic case is observed in the entire domain on the time scale 
\(\mc{O}(1)\). These observations are quite different from those of wave 
reflections, and it indicates that the effect of ghost forces is a separate numerical issue.

The rest of the paper is organized as follows. In Section 2 we
describe the one-dimensional atomistic model
and the derivation of the QC model, and
briefly demonstrate the appearance of the ghost forces. In Section 3 we
show results from several numerical tests. They provide some insight into
the evolution of the error.
The next three sections are devoted to the analysis of the error for
short and long time scales. We draw some conclusions in the last section.
\section{Motivation and the Formulation of the Problem}\label{sec2}
As in~\cite {EMing:2007b}, we consider the dynamic problem of a
one-dimensional chain of atoms. The interatomic interaction is assumed to be among
the nearest and the next nearest neighbors.
Let \(x\) be the reference position of an atom, and \(\wt{y}(x,t)\) be the current position at time \(t\). The equations of motion for the atoms in the chain read,
\begin{equation}\label{eq: md}
\left\{
\begin{aligned}
\ddot{\wt{y}}(x,t)-\Lat[\wt{y}](x,t)&=0,\qquad x\in\mb{L},\\
\wt{y}(x,0)&=x,\;\dot{\wt{y}}(x,0)=x,\\
\wt{y}(x,t)-x&\; \text{is periodic with period}\;1.%x\in\pa\mb{L}.
\end{aligned}\right.
\end{equation}
Here, we have set the mass to unity, and \(\mb{L}\equiv\{ j\varepsilon, j \in \mathbb{N}\}\cap (-1/2, 1/2)\) with \(\eps\) being the lattice parameter.
The operator \(\Lat\) is defined as
\begin{align}\label{eq: Lat}
\Lat[z](x,t)&\equiv
\eps^{-2}\bigl [\ka_2 z(x-2\eps,t)+\ka_1
z(x-\eps,t)+\ka_1 z(x+\eps,t)+\ka_2 z(x+2\eps,t)\nn\\
&\phantom{\equiv
\eps^{-2}\bigl [}\quad-2(\ka_1+\ka_2)z(x,t)\bigr].
\end{align}

Since the issue of ghost force arises even for harmonic interaction, we consider here a linear model, which
can be considered as a harmonic approximation of a fully nonlinear model.
In~\eqref{eq: Lat}, $\ka_1$ and $\ka_2$ are the force constants computed from an interatomic potential. For example, for a pair potential, the energy is given by
\[
  E= \sum_{x}\Biggl[\varphi\Lr{\frac{y(x+\eps)-y(x)}{\eps}}+ \varphi\Lr{\frac{y(x+2\eps)-y(x)}{\eps}}\Biggr].
\]
Direct calculation yields
\[
  \ka_1= \varphi''(1), \;\ka_2=\varphi''(2).
\]

One commonly used model is the Lennard-Jones potential~\cite {LennardJones:1924}:
\[
\varphi(r)=\Lr{\sigma/r}^{12}-\Lr{\sigma/r}^6.
\]
If only the nearest and the next nearest neighborhood interactions are considered, the lattice parameter is given by
\[
\eps=2^{1/6}\Lr{\dfrac{1+2^{-12}}{1+2^{-6}}}^{1/6}\sigma.
\]
In this case, the force constants are
\[
\ka_1=156C^2-42C\approx 18.886 \qquad
\text{and}\qquad\ka_2=2^{-6}(156C^22^{-6}-42C)\approx -0.323,
\]
where \(C=(1+2^{-6})/[2(1+2^{-12})]\).
%\[
%C=\dfrac{1+2^{-6}}{2(1+2^{-12})}.
%\]
The above formula has also appeared in~\cite{HouchmandzadehtLajzerowicztSalje:1992}. Notice that the second force constant is negative, but it is much smaller than the first force constant in magnitude.
\medskip

With the harmonic approximation, the potential energy takes the form of
\[
  E = \sum_{x}\Biggl[\frac{\ka_1}{2}\Lr{\frac{y(x+\eps)-y(x)}{\eps}}^2 + \frac{\ka_2}{2}\Lr{\frac{y(x+2\eps)-y(x)}{\eps}}^2\Biggr].
  \]
The dynamic model~\eqref{eq: md} can be derived from this energy using Hamilton's principle. Notice that
the energy can be divided into the energy at each atom site: i.e. \(E= \sum_x E(x)\), in which
\[
 \begin{aligned}
  E(x)&=\frac{\ka_2}{4} \Lr{\frac{y(x+2\eps)-y(x)}{\eps}}^2
  +\frac{\ka_1}{4} \Lr{\frac{y(x+\eps)-y(x)}{\eps}}^2 \\
  &\quad+\frac{\ka_1}{4} \Lr{\frac{y(x)-y(x-\eps)}{\eps}}^2
  +\frac{\ka_2}{4} \Lr{\frac{y(x)-y(x-2\eps)}{\eps}}^2.
  \end{aligned}
  \]

In the QC method, one defines a local region where the atomistic model is approximated
by the Cauchy-Born elasticity model~\cite {BornHuang:1954}.
One also defines a nonlocal region where the atomistic description is kept.
Without loss of generality, we assume that the interface is located at \(x=0\) and the
nonlocal region is in the domain \(x<0\). We further assume that the mesh size is equal to the lattice parameter to primarily focus on the effect of ghost forces.  The Cauchy-Born approximation of the energy in the local region is given by
\[
E_{{\rm CB}}(x)= \frac{\ka_1+4\ka_2}{4}\Lr{\frac{y(x+\eps)-y(x)}{\eps}}^2 +
   \frac{\ka_1+4\ka_2}{4}\Lr{\frac{y(x)-y(x-\eps)}{\eps}}^2.
\]
At the interface \(x=0\), the energy takes a mixed form:
\[
\begin{aligned}
E(0) &= \dfrac{\ka_2}{4} \Lr{\dfrac{y(x)-y(x-2\eps)}{\eps}}^2
+\dfrac{\ka_1}{4} \Lr{\frac{y(x)-y(x-\eps)}{\eps}}^2  \\
&\quad+\dfrac{\ka_1+4\ka_2}{4} \Lr{\dfrac{y(x+\eps)-y(x)}{\eps}}^2.
\end{aligned}
\]

With such energy summation rule, we may write the QC approximation of \(\Lat\) as \(\Lqc\), with \(\Lqc\) given below.
For $x\le -2\eps$,
\begin{align*}
\Lqc[z](x,t)&\equiv
\eps^{-2}\bigl [\ka_2 z(x-2\eps,t)+\ka_1
z(x-\eps,t)+\ka_1 z(x+\eps,t)+\ka_2 z(x+2\eps,t)\\
&\phantom{\equiv
\eps^{-2}\bigl [}\quad-2(\ka_1+\ka_2)z(x,t)\bigr],
\end{align*}
and for $x\ge 2\eps$,
\[
\Lqc[z](x,t)= \Lcb, \; \Lcb\equiv
\eps^{-2}(\ka_1+4\ka_2)[z(x-\eps,t)-2z(x,t)+z(x+\eps,t)].
\]
This is exactly the operator corresponding to the Cauchy-Born approximation.

At the interface, we have for $x=-\eps$,
\begin{align*}
\Lqc[z](x,t)&\equiv
\eps^{-2}\bigl[\ka_2 z(x-2\eps,t)+\ka_1 z(x-\eps,t)+\ka_1 z(x+\eps,t)+\dfrac{\ka_2}2
z(x+2\eps,t)\\
&\phantom{\equiv
\eps^{-2}\bigl[}\quad-(2\ka_1+3\ka_2/2)z(x,t)\bigr],
\end{align*}
for $x=0$,
\begin{align*}
\Lqc[z](x,t)&\equiv
\eps^{-2}\Bigl[\ka_2 z(x-2\eps,t)+\ka_1 z(x-\eps,t)+(\ka_1+4\ka_2)z(x+\eps,t)\\
&\phantom{\equiv\eps^{-2}\Bigl[}\quad-(2\ka_1+5\ka_2)z(x,t)\Bigr],
\end{align*}
and for $x=\eps$,
\begin{align*}
\Lqc[z](x,t)&\equiv
\eps^{-2}\Bigl[\dfrac{\ka_2}2z(x-2\eps,t)+(\ka_1+4\ka_2)z(x-\eps,t)
+(\ka_1+4\ka_2)z(x+\eps,t)\\
&\phantom{\eps^{-2}\Bigl[}\qquad-(2\ka_1+17\ka_2/2)z(x,t)\Bigr].
\end{align*}

Using the Hamilton's principle, we write the QC model as
\begin{equation}\label{eq: qc}
\left\{
\begin{aligned}
\ddot{\wt{y}}(x,t)-\Lqc[\wt{y}](x,t)&=0,\qquad x\in\mb{L},\\
\wt{y}(x,0)=x,\;\dot{\wt{y}}(x,0)&=x,\\
\wt{y}(x,t)-x\; &\text{is periodic with period}\; 1.% x\in\pa\mb{L}.
\end{aligned}\right.
\end{equation}

The initial and boundary conditions have been chosen as a uniform deformation
 in order to identify the effect of the ghost force.  We will compute the deviation of the solution away from the equilibrium. For this purpose, we
define the error $y(x,t)=\wt{y}(x,t)-x$, and we have,
\begin{equation}\label{eq: qcd}
\begin{aligned}
\ddot{y}(x,t)-\Lqc[y](x,t)&=\ddot{\wt{y}}(x,t)-\Lqc[\wt{y}-x](x,t)\\
&=\Lqc[x](x,t)\equiv f(x,t),
\end{aligned}
\end{equation}
with \(f\) given explicitly by
\begin{equation}\label{eq: gf}
f(x,t)=\left\{
\begin{aligned}
0\quad &\text{if\quad}\abs{x}\ge 2\eps,\\
-\dfrac{\ka_2}{\eps}\quad&\text{if\quad}x=-\eps,\\
\dfrac{2\ka_2}{\eps}\quad&\text{if\quad}x=0,\\
-\dfrac{\ka_2}{\eps}\quad&\text{if\quad}x=\eps.
\end{aligned}\right.
\end{equation}
The function \(f(x,t)\) is precisely the ghost force.
%Clearly, this source term is a discrete delta-function in spatial variable.
Since it is independent of the temporal variable, we denote it
by $f(x)$ for simplicity. Finally, we supplement the above problem
with the homogeneous initial condition and periodic boundary condition as
\begin{equation}\label{eq:ibc}
\left\{\begin{aligned}
y(x,0)&=0\text{\quad and\quad}\dot{y}(x,0)=0,\qquad x\in\mb{L}.\\
y(x,t)&\; \text{is periodic with period\;} 1.%&\qquad x\in\pa\mb{L}.
\end{aligned}\right.
\end{equation}
\section{Observations from Numerical Results}
Since the operator \(\Lqc\) coincides with \(\Lcb\) in the local region, and with \(\Lat\) in the
nonlocal region, it is natural to look at models similar to~\eqref{eq: qcd}, in which
\(\Lqc\) is replaced by either
\(\Lcb\) or \(\Lat\) in the entire domain. Therefore, our numerical experiments are conducted for the following three models:
\begin{itemize}
\item Model I. \(\Lqc\) is approximated by \(\Lcb\):
\(  \ddot{y} - \Lcb[y]= f. \)
\item Model II. \(\Lqc\) is approximated by \(\Lat\):
\(  \ddot{y} - \Lat[y]= f.\)
\item Model III: The quasicontinuum model \eqref{eq: qcd}.
\end{itemize}
In all these models, we impose homogeneous initial condition and periodic boundary condition~\eqref{eq:ibc}.

As an example, the force constants are obtained from the Morse potential~\cite {Morse:1929}. In particular, we choose
\(\ka_1= 4.4753\) and \(\ka_2= 0.4142\).
All the simulations are performed in the domain \(x\in [-1/2, 1/2]\), and the ODEs are integrated using the Verlet's method. Since all three models are Hamiltonian systems, this method is particularly suitable.

We first show the solutions computed from the three models at different time step.  The results are shown in Fig. \ref{fig: dy_x}. For this set of numerical tests, we have chosen \(\eps=1/2000\). We observe that the error first developed at the interface, and then it starts to spread toward the local and nonlocal region for all three models. Another noticeable
feature is that the error exhibits a peak at the interface, and the peak remains for all later time. At \(t=1\), the error is observed in the entire domain.
\begin{figure}[htbp]
\begin{center}
\includegraphics[width=1.5in,height=4in]{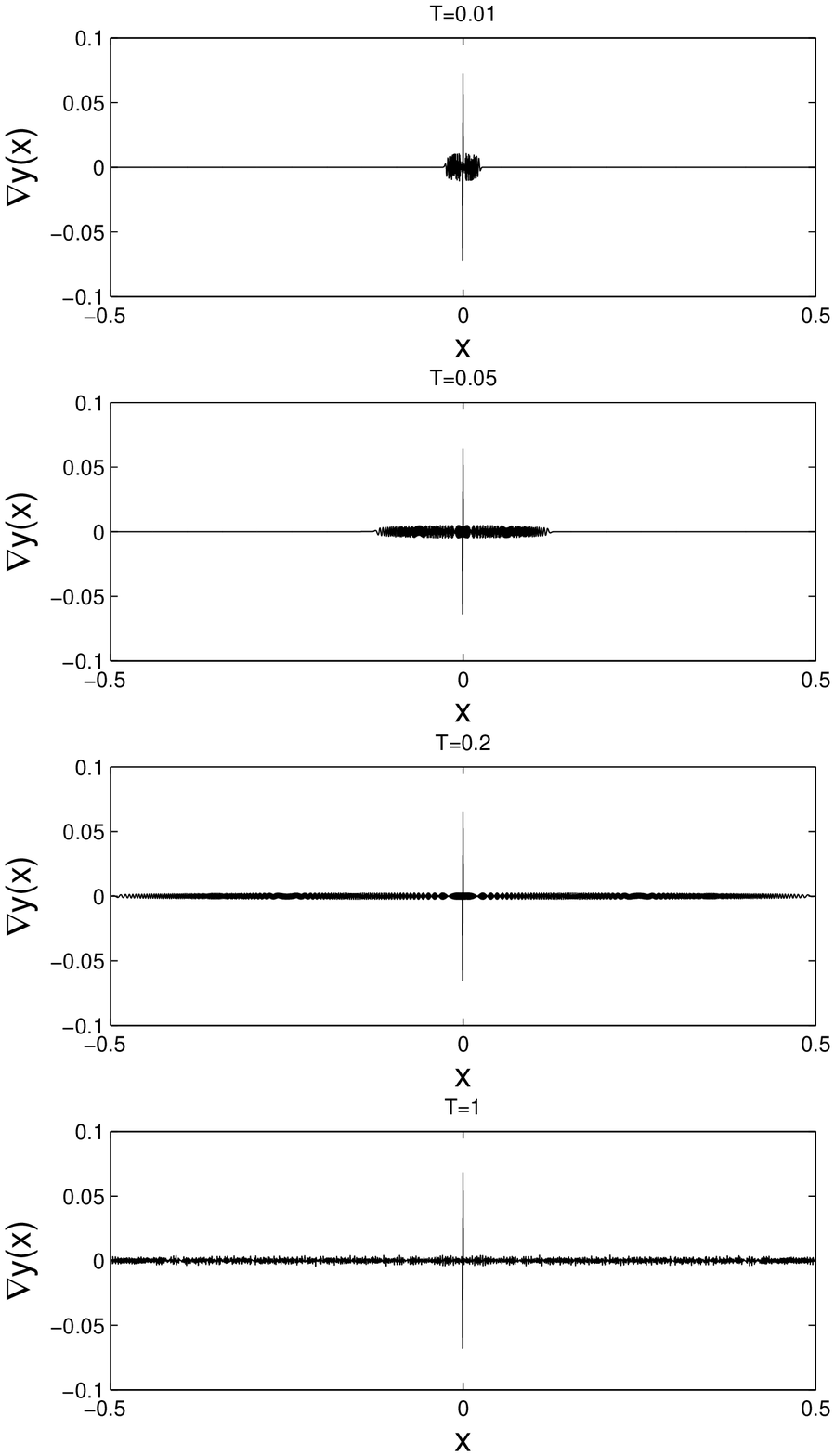}
\includegraphics[width=1.5in,height=4in]{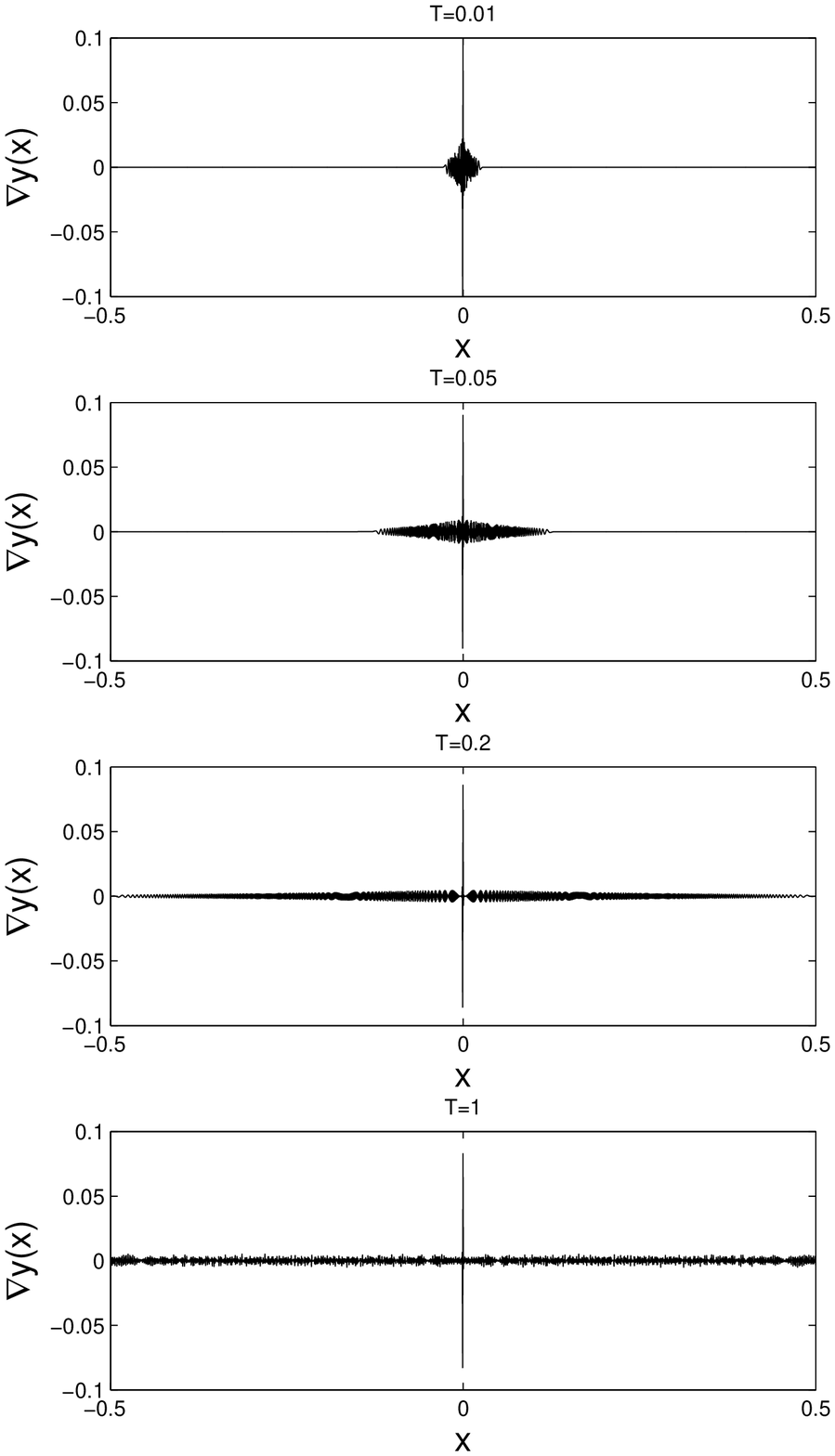}
\includegraphics[width=1.5in,height=4in]{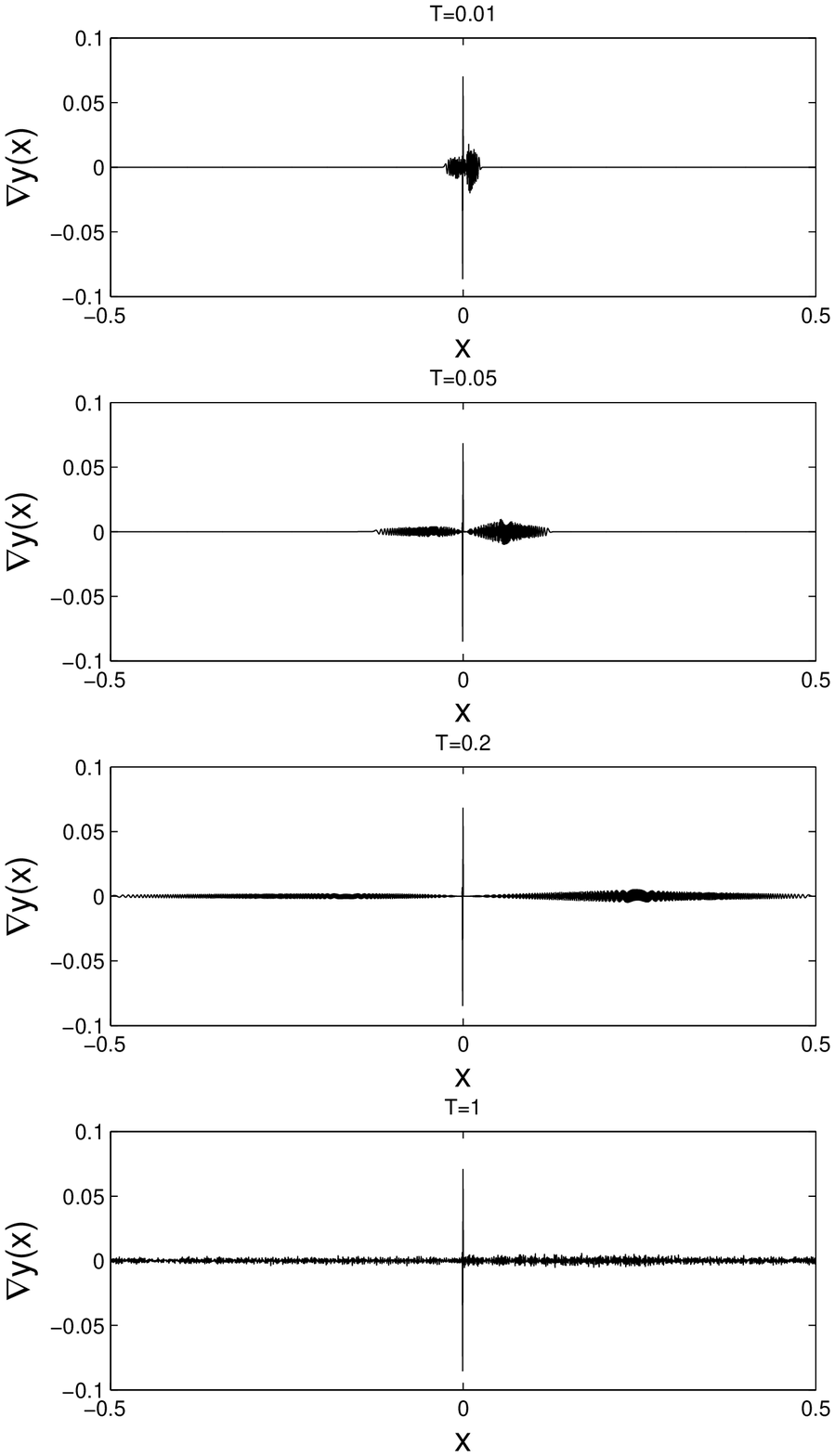}
\caption{The gradient of the error. Left to right: Solution computed from Model I, II
and III. From top to bottom: The solutions at time \(t=0.01, 0.05, 0.2 \;\rm{and}\; 1\). }
\label{fig: dy_x}
\end{center}
\end{figure}
Our main observations here can be summarized as following: (1) In the presence of ghost forces, the error grows very quickly. It reaches  \(\mc{O}(1)\) on the time scale of \(\mc{O}(\eps)\); (2) At the time scales of \(t=\mc{O}(\eps)\) and \(t= \mc{O}(1)\),
the solutions of all three models are qualitatively the same.

Next we monitor the solution for those atoms near the interface. In Fig. \ref{fig: dy_t}, we show the time history for those atoms. We observe that for most of the time, the error oscillates around certain constant values, and the constant values depend on the location of the atom. These constant values show a peak at the interface \(x=0\).
\begin{figure}[htbp]
\begin{center}
\includegraphics[width=1.5in,height=4in]{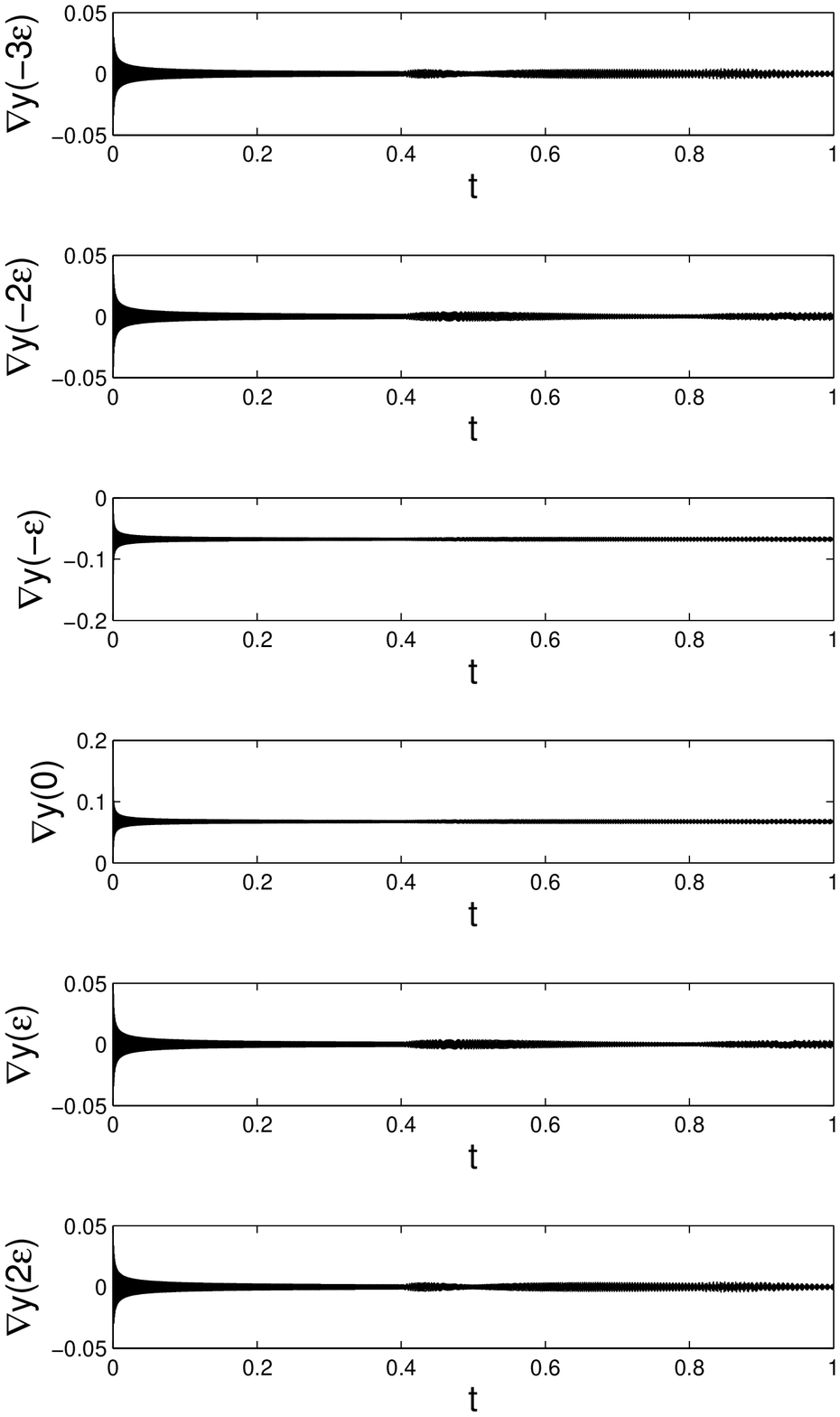}
\includegraphics[width=1.5in,height=4in]{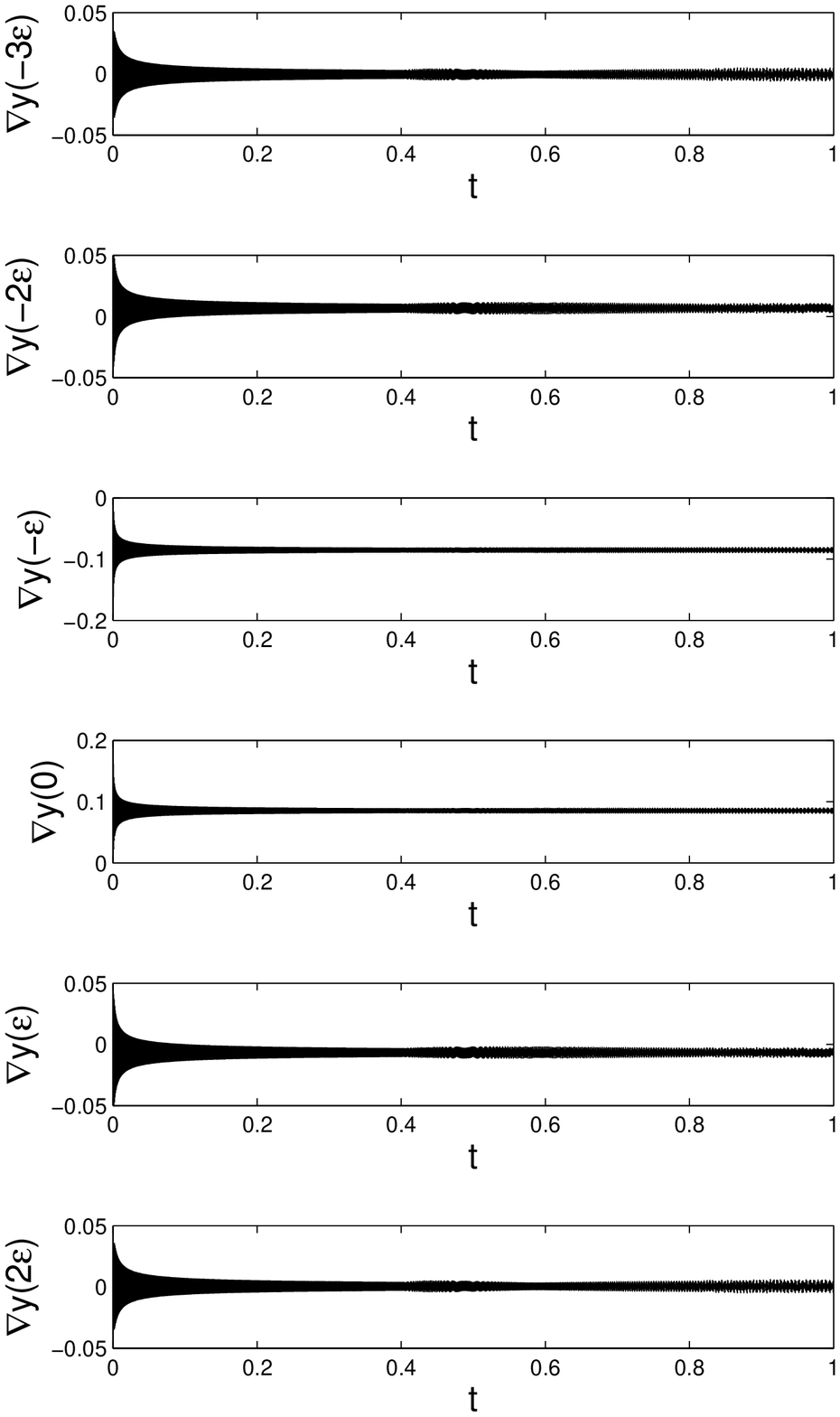}
\includegraphics[width=1.5in,height=4in]{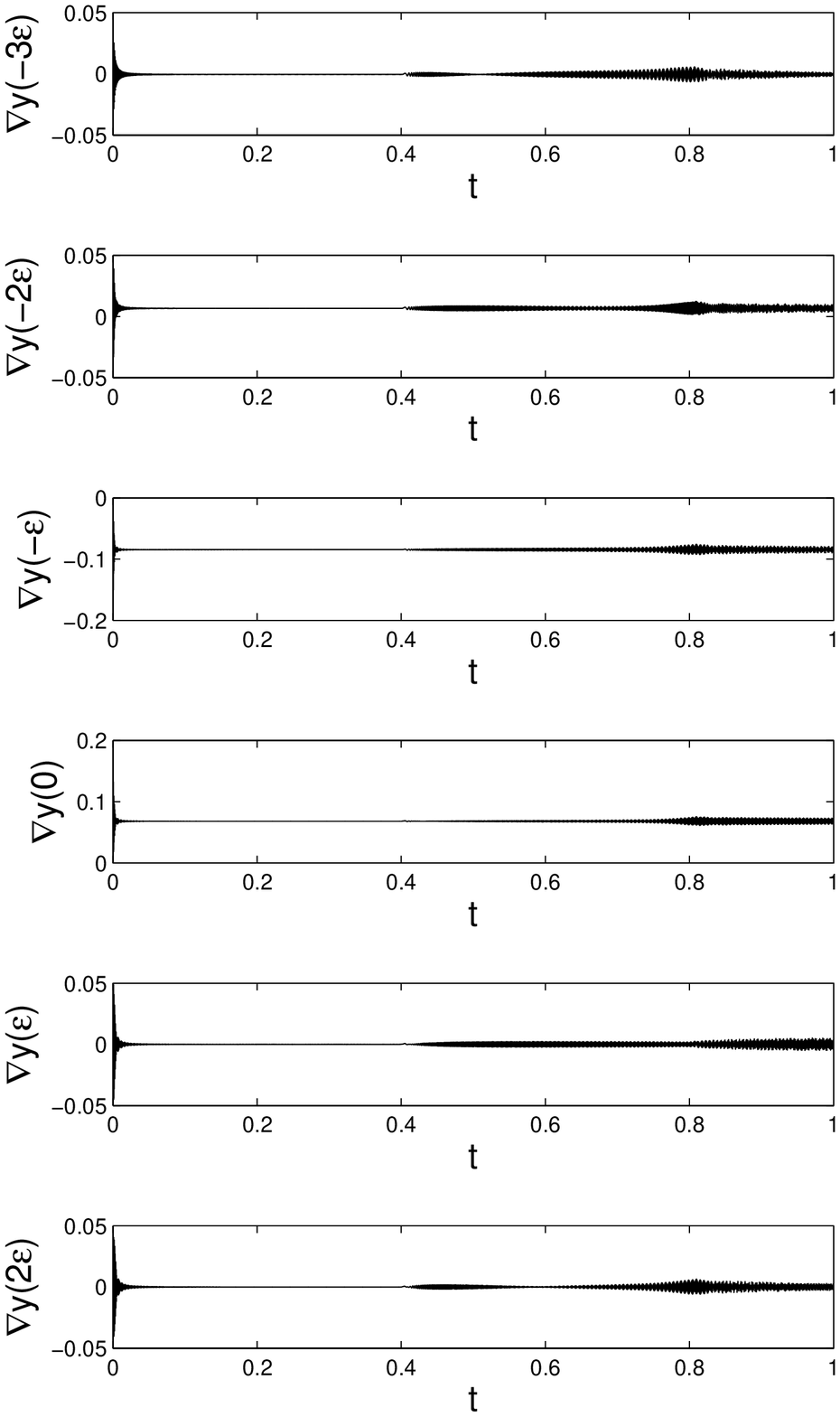}
\caption{The time history of the gradient of the error near the interface. Left to right: Solution computed from Model I, II and III. From top to bottom: The solutions near the interface: \(x=-3\eps\),\(x=-2\eps\),\(x=-\eps\),\(x=0\),\(x=\eps\) and \(x=2\eps\). }
\label{fig: dy_t}
\end{center}
\end{figure}

In the last two figures, Fig.~\ref{fig: dy_t_cb_N} and Fig.~\ref{fig: dy_t_qc_N}, we show the time history of the solution at the interface for various values of \(\eps\). The main observation is that the amplitude of the oscillation decrease as \(\eps\) gets small. However, the constant values around which the solutions oscillate do not change as \(\eps\) varies.

\begin{figure}[htbp]
\begin{center}
\includegraphics[width=1.5in,height=4in]{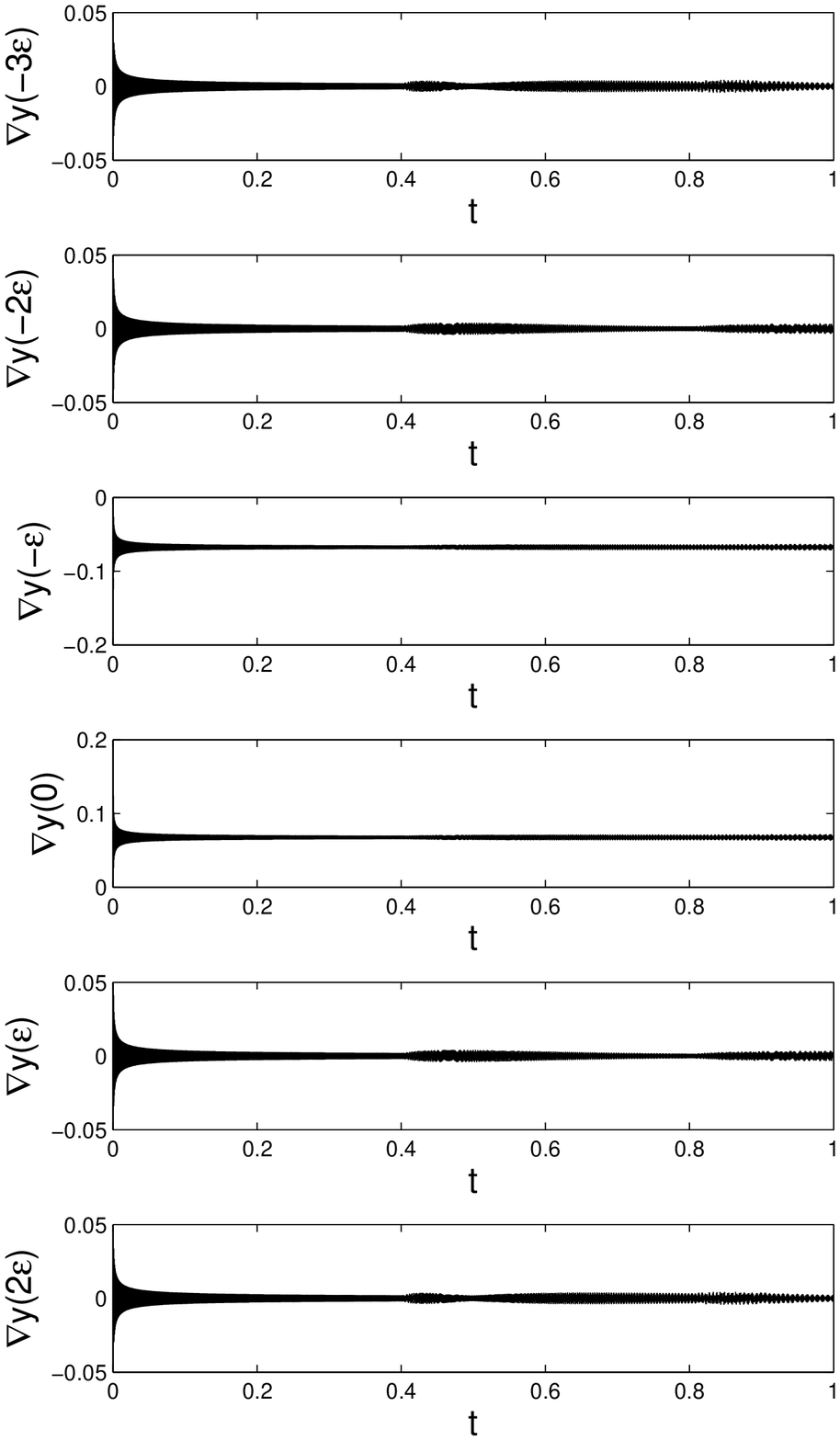}
\includegraphics[width=1.5in,height=4in]{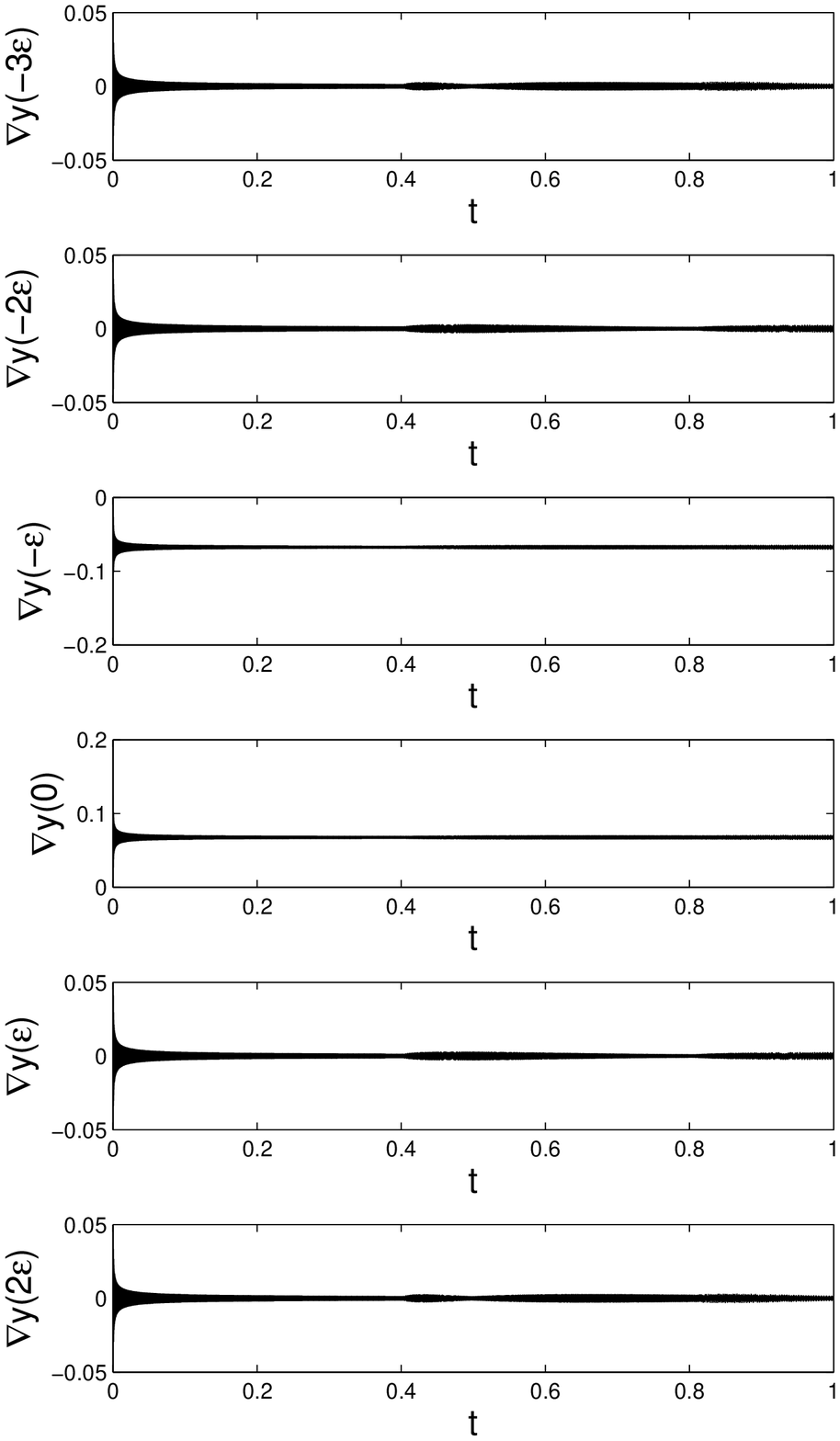}
\includegraphics[width=1.5in,height=4in]{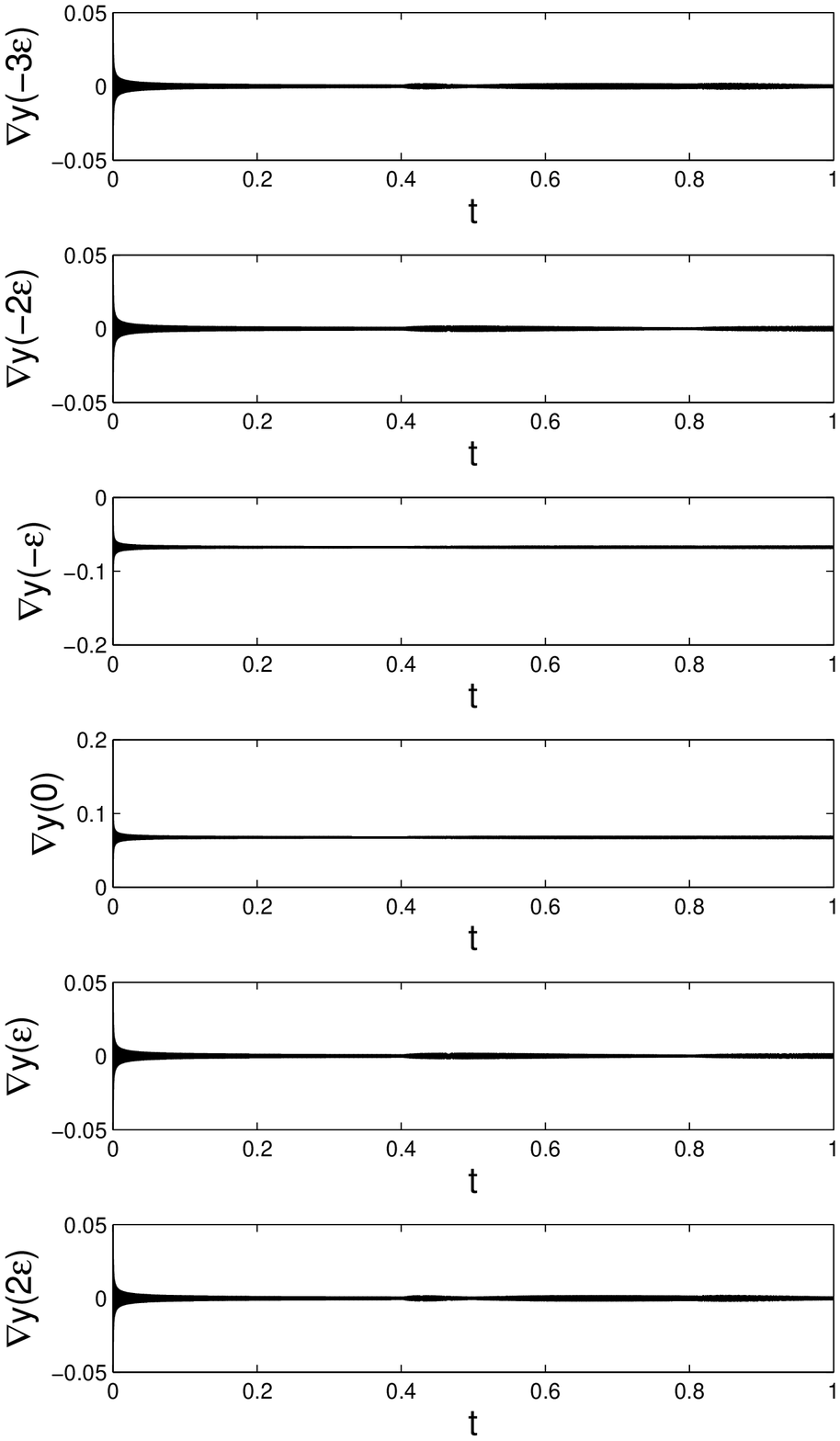}
\caption{The gradient of the error. The solutions are computed from Model I with
different choice of \(\epsilon\). Left to right: Solution computed for \(\eps=1/2000\),
\(\eps=1/4000\) and \(\eps=1/8000\).
From top to bottom: The solutions near the interface: \(x=-3\eps\), \(x=-2\eps\), \(x=-\eps\), \(x=0\), \(x=\eps\) and \(x=2\eps\). }
\label{fig: dy_t_cb_N}
\end{center}
\end{figure}

\begin{figure}[htbp]
\begin{center}
\includegraphics[width=1.5in,height=4in]{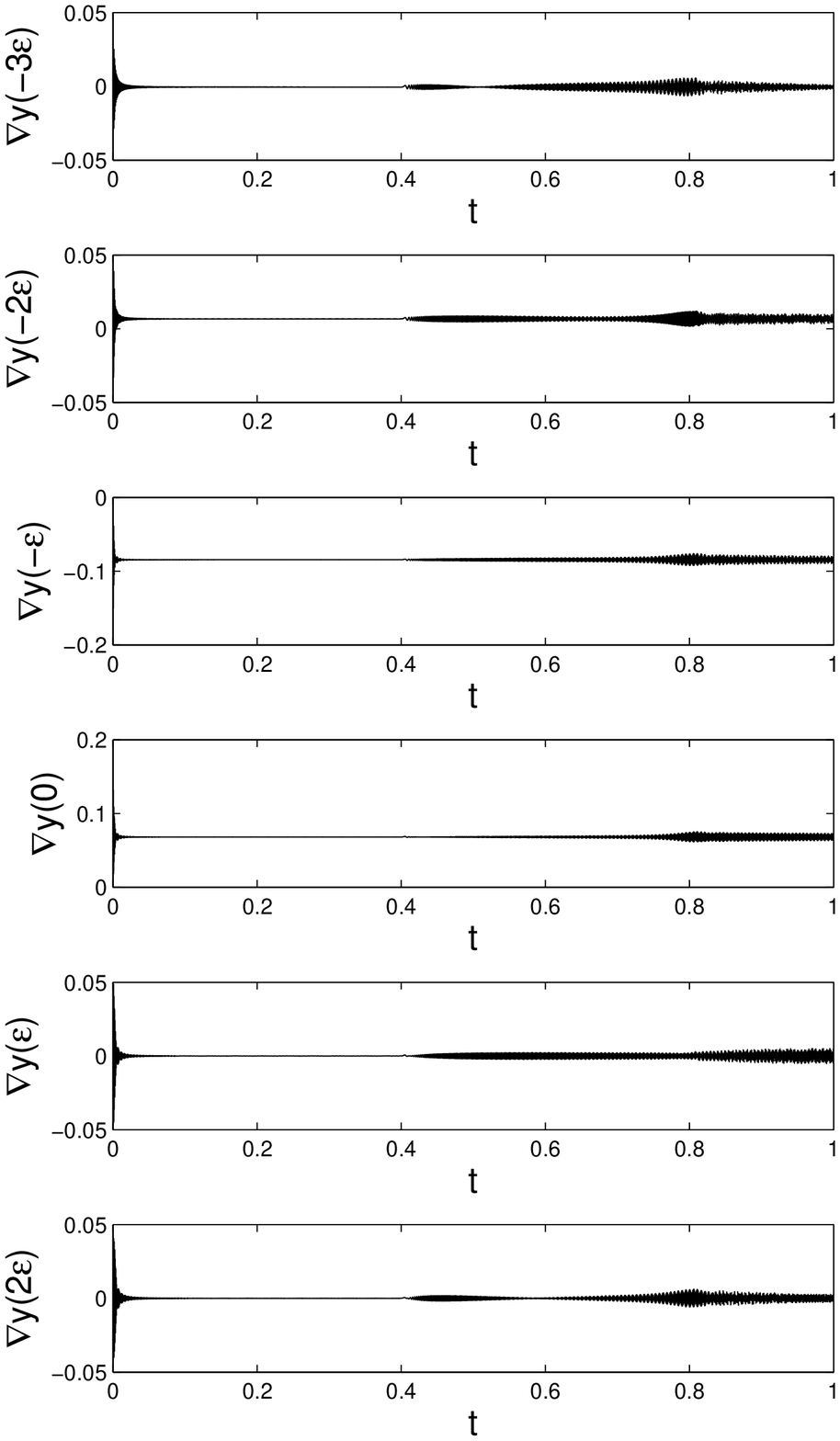}
\includegraphics[width=1.5in,height=4in]{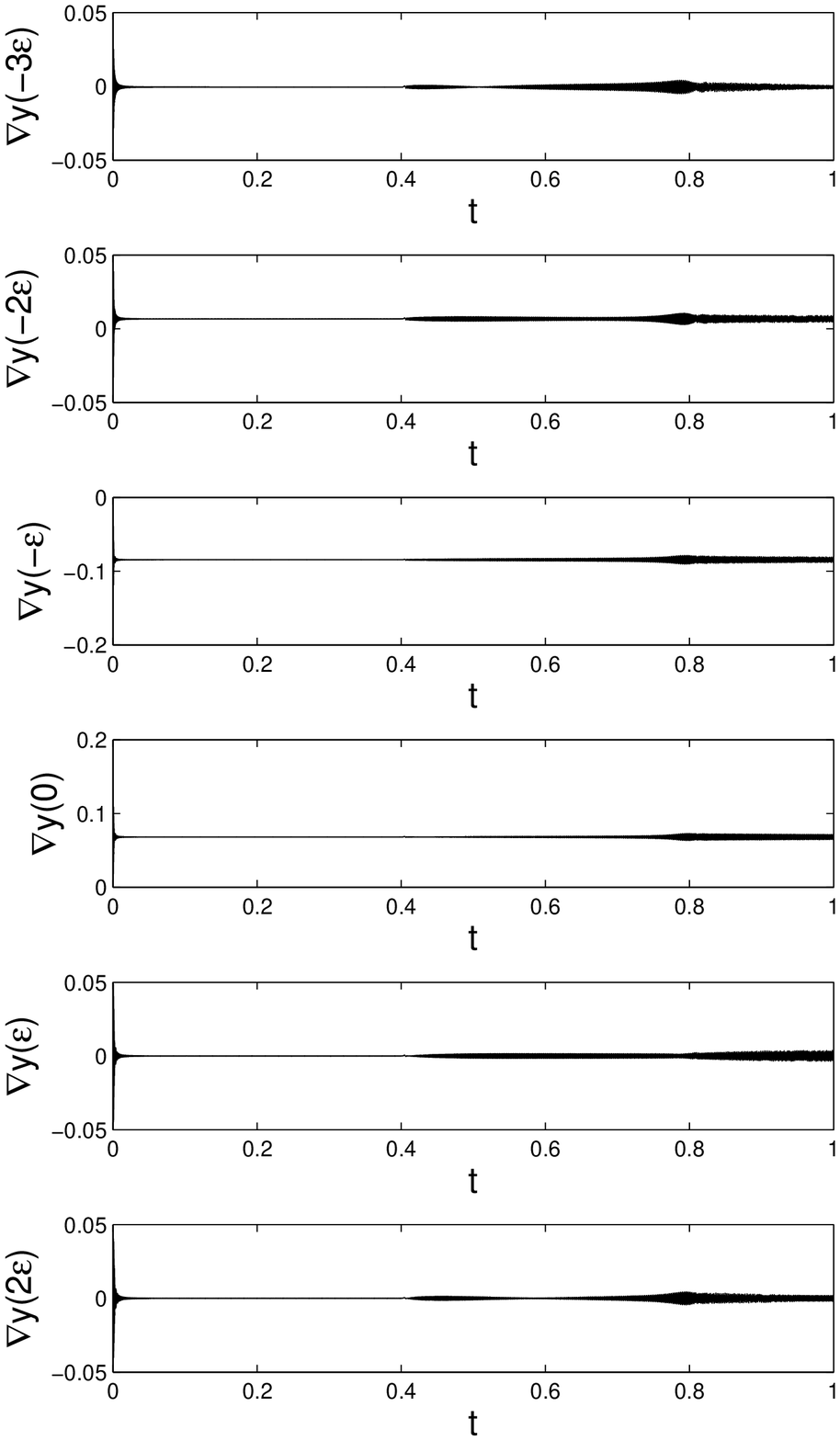}
\includegraphics[width=1.5in,height=4in]{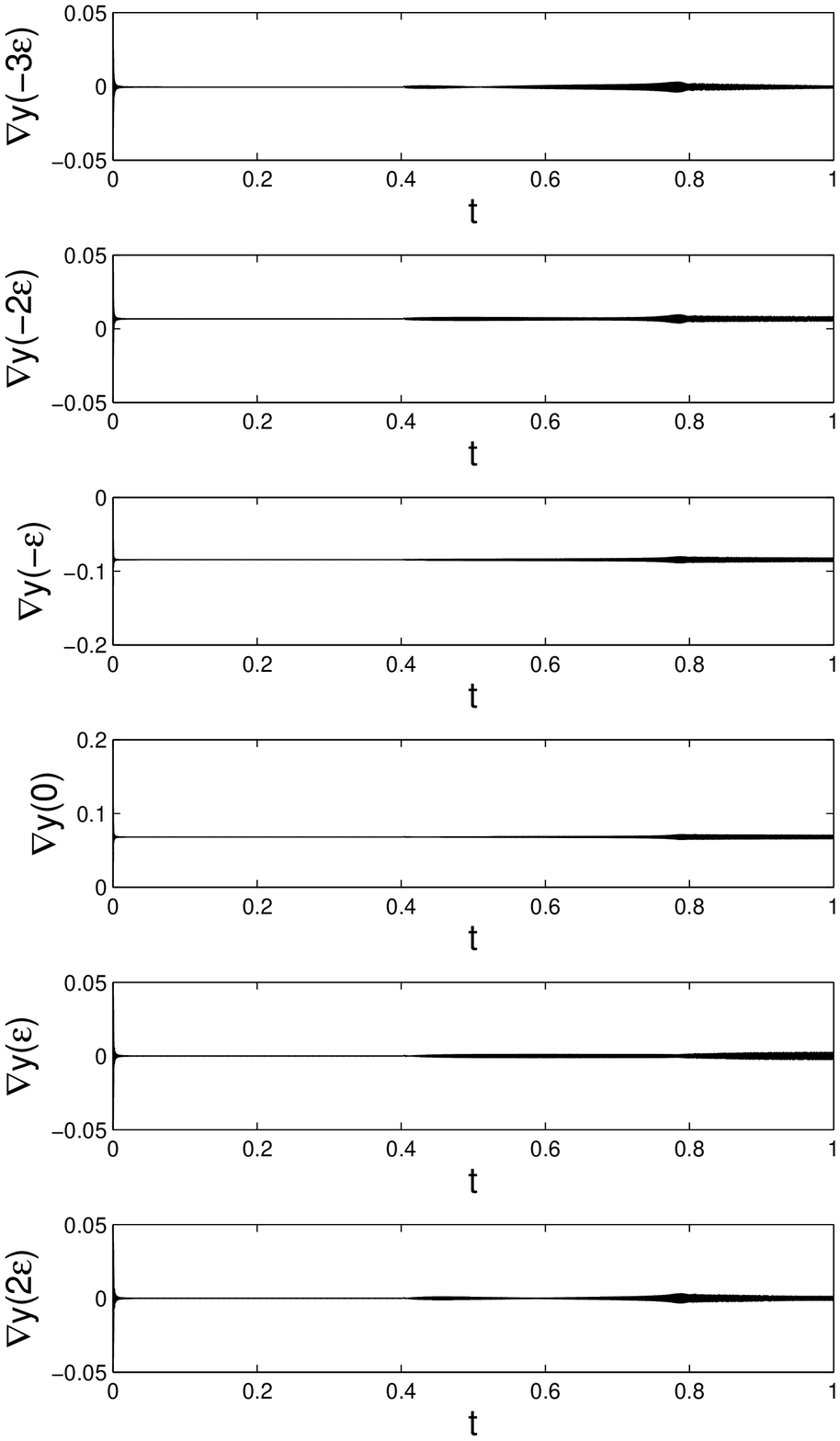}
\caption{The gradient of the error. The solutions are computed from Model III with
different choices of \(\eps\). Left to right: Solution computed for \(\eps=1/2000\),
\(\eps=1/4000\) and \(\eps=1/8000\).
From top to bottom: The solutions near the interface: \(x=-3\eps\), \(x=-2\eps\), \(x=-\eps\), \(x=0\), \(x=\eps\) and \(x=2\eps\). }~\label{fig: dy_t_qc_N}
\end{center}
\end{figure}
\section{Explicit Solutions for the Approximating Model}
In view of the numerical results, it seems that the solution of model I bears similarity to the dynamical behavior of the original problem \eqref{eq: qcd} on the time scale $\mc{O}(\eps)$ and time scale $\mc{O}(1)$.  Therefore we will turn to this model to study the effect of ghost forces. Model I is convenient to analyze, particularly because it admits explicit solutions of a simple form. Recall that in Model I, we solve the following problem,
\begin{equation}\label{eq:reducemodel}
\left\{
\begin{aligned}
\ddot{y}(x,t)-\Lcb[y](x,t)&=f(x),\qquad x\in\mb{L},\\
y(x,0)=0,\dot{y}(x,0)&=0,\\
y(x,t)&\text{\quad is periodic with period}\;1.
\end{aligned}\right.
\end{equation}
%The expression for $\Lat$ is the same with $\Lqc$ for $x\ge 2\eps$.
Without loss of generality, we let $\mb{L}=(-1/2,1/2]$ with $N$ atoms. We assume
that $N$ is an even integer for technical simplicity. Obviously, $\eps=1/N$. We will switch to the notation that,
\begin{equation}\label{eq:number}
w(n,t)= w(-1/2+n\eps,t),\quad n=1, 2, \cdots, N.
\end{equation}

We now express the solution of~\eqref{eq:reducemodel} in an explicit form. To begin with, we consider the lattice Green's function,  which is
defined as the solution of the following problem:
\begin{equation}\label{eq:latgreen}
\left\{
\begin{aligned}
\ddot{G}(n,t)-\Lcb[G](n,t)&=0,\qquad n=0,\dots,N,\\
G(n,0)=0,\dot{G}(n,0)&=\delta_n,\\
G(n,t)&=G(n+N,t).
\end{aligned}\right.
\end{equation}

Given this Green's function, the solution of~\eqref{eq:reducemodel} is given by
\[
y(n,t)=\int_0^t\Lr{\sum_{m=1}^NG(n-m,t-s)f(m)}\md s,
\]
where $f$ is given by~\eqref{eq: gf} under the transform~\eqref{eq:number}.
As a result, we have
\[
\sum_{m=1}^NG(n-m,t)f(m)=\dfrac{\ka_2}{\eps}
\Lr{2G(n-L,t)-G(n-L-1,t)-G(n-L+1,t)},
\]
where we have set $L=N/2$.

By separation of variables, we have the following explicit form for
the lattice Green's function $G$:
\begin{equation}\label{eq:latgreen'}
G(n,t)=\dfrac{t}{N}+\dfrac1N\sum_{k=1}^{N-1}\dfrac{\sin[\om_k t]}{\om_k}
\cos\dfrac{2kn\pi}N,
\end{equation}
with \(\omega_k\) being the dispersion relation given by
\(
\om_k=(2/\eps)\sqrt{\ka_1+4\ka_2}\sin(k\pi/N).
\)

Using $\sum_{m=1}^Nf(m)=0$, we write
\[
\sum_{m=1}^NG(n-m,t-s)f(m)=\dfrac{4\ka_2}{N\eps}
\sum_{k=1}^{N-1}\dfrac{\sin[\om_k(t-s)]}{\om_k}
\sin^2\dfrac{k\pi}N\cos\dfrac{2k\pi}N(n-L).
\]
This leads to
\[
y(n,t)=\int_0^t\Lr{\sum_{m=1}^NG(n-m,t-s)f(m)}\md s
=\dfrac{\eps}{N}\dfrac{2\ka_2}{\ka_1+4\ka_2}
\sum_{k=1}^{N-1}\sin^2\dfrac{\om_kt}2\cos\dfrac{2k\pi}N(n-L).
\]
Using the above expression we bound $y(n,t)$ as
\begin{equation}\label{eq:point-solu}
\abs{y(n,t)}\le\dfrac{2\abs{\ka_2}}{\ka_1+4\ka_2}\eps.
\end{equation}
This estimate shows that the magnitude of the error
$y(n,t)$ is as small as $\mc{O}(\eps)$ for all $n$ and all time
$t$. This in turn
suggests that the error  induced by the ghost force is small in the maximum norm, which 
is consistent with that of the static problem~\cite{DobsonLuskin:2009a, MingYang:2009,
ChenMing:2012}.

Next we consider the discrete gradient of the error. A direct calculation gives
\begin{align*}
D y(n,t)&\equiv\dfrac{y(n+1,t)-y(n,t)}\eps\\
&=-\dfrac{1}N\dfrac{4\ka_2}{\ka_1+4\ka_2}\sum_{k=1}^{N-1}
\sin\dfrac{2k\pi}N(n+1/2-L)\sin\dfrac{k\pi}N\sin^2\dfrac{\om_kt}2.
\end{align*}
Clearly we may write the above expression as
\begin{equation}\label{eq:derexp}
D y(n,t)=-\dfrac{1}N\dfrac{4\ka_2}{\ka_1+4\ka_2}\sum_{k=0}^N
\sin\dfrac{2k\pi}N(n+1/2-L)\sin\dfrac{k\pi}N\sin^2\dfrac{\om_kt}2.
\end{equation}
 It follows from the above expression that $D y(n,t)$ is anti-symmetric in the 
sense of
\[
D y(n,t)=-D y(N-n-1).
\]
Therefore, we only consider the case $n\ge L$.
By~\eqref{eq:point-solu} we bound $D y(n,t)$ trivially:
\[
\abs{D y(n,t)}\le\dfrac{4\abs{\ka_2}}{\ka_1+4\ka_2}.
\]
This shows that
$D y(n,t)$ is uniformly bounded for all $n$ and all time $t$.
In the next two sections, we seek for a refined pointwise estimate of
$D y(n,t)$ when $t$ is of $\mc{O}(1)$ and of $\mc{O}(\eps)$. 
Notice that the same method can be employed to obtain
a refined pointwise estimate of $y(n,t)$. We leave it to the interested
readers. 
\section{Estimate of the error over long time}
In this section, we estimate the error for \(t=\mathcal{O}(1)\). By~\eqref{eq:derexp}, we write
$D y(L,t)$ as
\[
D y(L,t)=-\dfrac{1}N\dfrac{4\ka_2}{\ka_1+4\ka_2}\sum_{k=0}^N
\sin^2\dfrac{k\pi}N\sin^2\dfrac{\om_kt}2.
\]
Using the identity
\[
\sum_{k=0}^N\sin^2\dfrac{k\pi}N=\dfrac{N}2.
\]
We write
\begin{align}
D y(L,t)&=-\dfrac{1}N\dfrac{2\ka_2}{\ka_1+4\ka_2}\sum_{k=0}^N
\sin^2\dfrac{k\pi}N+\dfrac{1}N\dfrac{2\ka_2}{\ka_1+4\ka_2}
\sum_{k=0}^N\sin^2\dfrac{k\pi}N\cos(\om_kt)\nn\\
&=-\dfrac{\ka_2}{\ka_1+4\ka_2}+\dfrac{1}N\dfrac{2\ka_2}{\ka_1+4\ka_2}
\sum_{k=0}^N\sin^2\dfrac{k\pi}N\cos(\om_kt),
\label{eq:midexp}
\end{align}

When \(n\ne L\), we use the fact that
\[
\sum_{k=0}^N
\sin\dfrac{2k\pi}N(n+1/2-L)\sin\dfrac{k\pi}N=0,
\]
and we write the expression of $D y(n,t)$ in~\eqref{eq:derexp} as
\[
D y(n,t)=\dfrac{1}N\dfrac{2\ka_2}{\ka_1+4\ka_2}\sum_{k=0}^{N}
\sin\dfrac{2k\pi}N(n+1/2-L)\sin\dfrac{k\pi}N\cos(\om_kt),
\]
which can be further decomposed into
\begin{align}
Dy(n,t)&=\dfrac1{N}\dfrac{\ka_2}{\ka_1+4\ka_2}\sum_{k=0}^N
\sin\dfrac{k\pi}N\biggl\{\sin\Lr{\om_kt+\dfrac{2k\pi}N(n+1/2-L)}\nn\\
&\phantom{\dfrac1{N}\dfrac{\ka_2}{\ka_1+4\ka_2}\sum_{k=0}^N
\sin\dfrac{k\pi}N}\qquad
-\sin\Lr{\om_kt-\dfrac{2k\pi}N(n+1/2-L)}\biggr\}.\label{eq:genexp}
\end{align}

To bound $D y(n,t)$, we need to estimate an exponential
sum of the following form,
\[
\sum_{k=0}^Ne(f(k)),
\]
where the shorthand $e(f(k))\equiv\exp(2\pi\I f(k))$ is assumed.
The basic tool that will be used is the truncated form of the Poisson summation formula
 due to { Van der Corput}~\cite {VanderCorput:1921}.
The following form
with an explicit estimate for the remainder term can be found in~\cite [Lemma 7]{KaratsubaKorolev:2007}.
%Similar result can be found in~\cite [Lemma 1, p. 10]{Karatsuba:1993};
%
\begin{theorem}{\sc (Truncated Poisson)}~\label {thm:TruncPoisson}
Let $f$ and $\phi$ be real-valued functions satisfying the following conditions
on a closed interval $[a,b]$:
\begin{enumerate}
\item  $f^{\prime\prime}$ and $\phi^\prime(x)$ are continuous;
\item $0<f^{\prime\prime}(x)\le C_0$;
\item There are positive constants $H,U,\phi_0,\phi_1,\lambda$
such that $U\ge 1,0<b-a\le\lambda U$ and
\[
\abs{\phi(x)}\le\phi_0 H,\quad \abs{\phi^\prime(x)}\le\phi_1 H/U.
\]
\end{enumerate}
For any $\Delta,0<\Delta<1$, the equation
\begin{equation}\label{eq:tpoisson}
\sum_{a<n\le b}\phi(n)e(f(n))=\sum_{\al-\Delta\le\nu\le\beta+\Delta}
\int_a^b\phi(x)e(f(x)-\nu x)\dx+\theta R
\end{equation}
holds, where $\al=f'(a)$, $\beta=f'(b)$ and
\begin{align*}
R&=(\phi_0+\lambda\phi_1)H\Bigl(9.42+9C_0+12\Delta+\pi^{-1}\bigl(10\Delta^{-1}
+2\ln\Delta^{-1}\\
&\phantom{=(\phi_0+\lambda\phi_1)H}\quad+4.5(1+\Delta)^{-1}-4.5\ln (1+\Delta)+6.5\ln(\beta-\al+2)\bigr)\Bigr).
\end{align*}
Here $\theta$ is a function such that $\abs{\theta}\le 1$.
\end{theorem}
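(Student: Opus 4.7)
The plan is to derive the formula from the classical Poisson summation identity $\sum_{n\in\mathbb{Z}} g(n)=\sum_{\nu\in\mathbb{Z}}\widehat g(\nu)$ applied to a smoothed truncation $g(x)=\psi_\Delta(x)\phi(x)e(f(x))$, where $\psi_\Delta$ is a piecewise linear (or similarly explicit) bump equal to $1$ on $[a,b]$, supported in $[a-\Delta,b+\Delta]$, with $|\psi_\Delta'|\le 1/\Delta$. Smoothing is needed because the sharp indicator of $(a,b]$ produces Fourier coefficients decaying only like $1/\nu$, which spoils absolute convergence; the parameter $\Delta$ in the theorem is precisely the transition width of this bump. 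First I would replace $\sum_{a<n\le b}\phi(n)e(f(n))$ by $\sum_n\psi_\Delta(n)\phi(n)e(f(n))$; the discrepancy is the contribution of at most $2\Delta+O(1)$ boundary atoms in the transition layers, each bounded by $\phi_0 H$, feeding the $12\Delta$ constant in $R$.

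By Poisson summation $\sum_n\psi_\Delta(n)\phi(n)e(f(n))=\sum_\nu I_\nu$ where $I_\nu=\int\psi_\Delta(x)\phi(x)e(f(x)-\nu x)\,dx$. I would split the $\nu$-sum at $\nu=\alpha-\Delta$ and $\nu=\beta+\Delta$, and for the main range replace each $I_\nu$ by $\int_a^b\phi(x)e(f(x)-\nu x)\,dx$, the difference again being controlled by the transition layers. The heart of the argument is the tail estimate. Since $f''>0$, the derivative $f'$ is monotone increasing from $\alpha$ to $\beta$ on $[a,b]$, so using $f''\le C_0$ to control $f'$ on the slightly enlarged interval yields $|f'(x)-\nu|\gtrsim\mathrm{dist}(\nu,[\alpha,\beta])$ throughout $\mathrm{supp}\,\psi_\Delta$ for $\nu$ in the tail. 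Integration by parts then gives
$$I_\nu=-\frac{1}{2\pi\I}\int\frac{d}{dx}\!\Lr{\frac{\psi_\Delta(x)\phi(x)}{f'(x)-\nu}}e(f(x)-\nu x)\,dx,$$
whose three summands involve $\psi_\Delta'/(f'-\nu)$ of size $1/(\Delta|\nu-\text{endpt}|)$, $\phi'/(f'-\nu)$ of size $\phi_1 H/(U|\nu-\text{endpt}|)$, and $f''/(f'-\nu)^2$ of size $C_0\phi_0 H/|\nu-\text{endpt}|^2$; integration over length $\le\lambda U$ produces the $\lambda\phi_1$ term in the prefactor $\phi_0+\lambda\phi_1$.

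Summing these over the tail $\nu$ yields the remaining explicit constants in $R$: $\sum_{|\nu-\alpha|>\Delta}1/|\nu-\alpha|^2$ is of order $1/\Delta$ and gives the $10\Delta^{-1}$ term; the first-derivative sums $\sum_{\Delta<|\nu-\alpha|\le\beta-\alpha}1/|\nu-\alpha|$ produce the $\ln\Delta^{-1}$ and $\ln(\beta-\alpha+2)$ contributions; the $9C_0$ coefficient absorbs the $f''$-piece; and the $1/(2\pi)$ prefactor from integration by parts accounts for the $\pi^{-1}$ on the logarithms. The main obstacle is the careful bookkeeping of all the explicit numerical constants, which is the whole reason this statement is stronger than a plain $O(\cdot)$ bound: one must choose $\psi_\Delta$ carefully, quantify precisely how much the smoothed and sharp integrals differ at each $\nu$, and verify that all three tail sums can be dominated by the stated quantities. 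A secondary subtlety is the treatment of $\nu$ just outside the main range (say $\nu=\alpha-\Delta$), where $|f'(x)-\nu|$ can be as small as $\Delta$ for some $x$; this is where the $\Delta^{-1}$ term in $R$ is generated, and where the $C_0$-bound on $f''$ is essential to prevent a worse blow-up.
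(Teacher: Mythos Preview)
The paper does not prove this theorem at all: it is quoted verbatim from Karatsuba--Korolev (their Lemma~7, following Van der Corput), so there is no in-paper proof to compare against.  What you are really proposing is a reconstruction of the classical argument, and your overall architecture---Poisson summation, retain the $\nu$ in $[\alpha-\Delta,\beta+\Delta]$, bound the tail by one integration by parts using $|f'(x)-\nu|\ge\Delta$---is correct, as is your attribution of the various pieces of $R$ (the $\Delta^{-1}$ from near-boundary $\nu$, the $\ln(\beta-\alpha+2)$ from the harmonic tail sum, the $C_0$ from the $f''/(f'-\nu)^2$ term, the $\pi^{-1}$ from the $1/(2\pi)$ in integration by parts).

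There is, however, a genuine gap in the setup.  You place the bump $\psi_\Delta$ on $[a-\Delta,b+\Delta]$, but $f$ and $\phi$ are only given on $[a,b]$, so $\psi_\Delta(x)\phi(x)e(f(x))$ is undefined on the transition layers; the hypotheses provide no extension, and manufacturing one while preserving the bound $0<f''\le C_0$ is not automatic.  More importantly, you identify the spatial smoothing width with the parameter $\Delta$ in the $\nu$-range, and this identification is not justified: in the theorem $\Delta$ is purely a truncation buffer in the frequency variable, and nothing ties it to any mollification in $x$.  The classical Van der Corput proof avoids both issues by never smoothing in $x$.  One writes, via Abel summation or equivalently the Fourier expansion of the sawtooth $\rho(x)=x-\lfloor x\rfloor-\tfrac12=-\sum_{\nu\ne0}(2\pi\I\nu)^{-1}e(\nu x)$, the sharp sum as $\int_a^b\phi\,e(f)\,dx$ plus a series $\sum_{\nu\ne0}\int_a^b\phi(x)e(f(x)-\nu x)\,dx$ with an explicitly controlled truncation error for $\rho$; then one discards the $\nu$ outside $[\alpha-\Delta,\beta+\Delta]$ by the first-derivative test, exactly as you describe for the tail.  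Your scheme can be repaired by smoothing inward (support $\psi_\Delta\subset[a,b]$), but then the boundary discrepancy becomes an integral over the transition zones rather than a sum of atoms, and recovering the specific constants $9.42$, $4.5(1+\Delta)^{-1}-4.5\ln(1+\Delta)$, etc., would amount to redoing the Karatsuba--Korolev bookkeeping from scratch.
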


The assumption $f^{\prime\prime}>0$ can be relaxed to either
$f^{\prime\prime}\ge 0$ or $f^{\prime\prime}\le 0$. In the latter case, the
second condition is replaced by $-C_0\le f^{\prime\prime}(x)\le 0$.
\subsection{The estimate for $D y(N/2,t)$}
To bound $D y(N/2,t)$, we start with~\eqref{eq:midexp}. Based on the above theorem, we transform the exponential sum in~\eqref{eq:midexp} to a shorter sum with a bounded remainder. To clarify the dependance of the constant, we denote
\[
\ga=t\sqrt{\ka_1+4\ka_2},
\]
and assume that $1\le\ga\le N$ since $t=\mc{O}(1)$ and $N\ge 2$. We also
denote by $\integer{p}$ the integer part of a real number $p$, and
denote its fractional part by $\fraction{p}=p-\integer{p}$.
\begin{lemma}~\label{lema:trans}
Let $\phi(x)=\sin^2(\pi x/N)$ and $f(x)=\ga/(\pi\eps)\sin(\pi x/N)$.
There holds
\begin{align}\label{eq:fexpsum}
\dfrac{1}N\labs{\sum_{k=0}^N\phi(k)e(f(k))}
&\le\labs{\dfrac2N
\sum_{\al-1/2\le\nu\le\ga+1/2}\int_0^{N/2}\phi(x)
e(f(x)-\nu x)\dx}\nn\\
&\quad+C\eps(1+\eps\ga+\log(\ga+2)),
\end{align}
where $C$ is independent of $N, t$ and $\ga$.
\end{lemma}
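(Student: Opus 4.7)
The plan is to apply the truncated Poisson formula (Theorem~\ref{thm:TruncPoisson}) to the exponential sum after first exploiting the symmetry of $\phi$ and $f$ to restrict attention to a half-range, where $f'$ is monotone. Since $\phi(k)=\phi(N-k)$ and $f(k)=f(N-k)$, and moreover $\phi(0)=\phi(N)=0$, the sum collapses to
\[
\sum_{k=0}^{N}\phi(k)e(f(k))
=2\sum_{k=0}^{N/2}\phi(k)e(f(k))-\phi(N/2)e(f(N/2)).
\]
The second term has modulus $1$, so after dividing by $N$ it contributes a harmless $\eps$ to the final bound. This reduces the task to bounding the folded sum on $[0,N/2]$.

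Next I would verify the hypotheses of Theorem~\ref{thm:TruncPoisson} on $[a,b]=[0,N/2]$ with $\Delta=1/2$. A direct computation gives $f'(x)=\ga\cos(\pi x/N)$ and $f''(x)=-(\pi\ga/N)\sin(\pi x/N)$, so $f''\le 0$ on $[0,N/2]$ (the concave variant noted after the theorem) with $|f''|\le \pi\ga/N=\pi\ga\eps\le\pi$, since $\ga\le N$; thus one may take $C_0=\pi$. For the amplitude, $|\phi(x)|\le 1$ and $|\phi'(x)|=(\pi/N)|\sin(2\pi x/N)|\le\pi/N$, so I choose $H=1$, $U=N/2$, $\lambda=1$, $\phi_0=1$, $\phi_1=\pi/2$. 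The endpoints of $f'$ are $f'(0)=\ga$ and $f'(N/2)=0$, so the range of $\nu$ in the Poisson identity becomes $-1/2\le\nu\le\ga+1/2$, which matches the range written in the lemma (with $\al$ denoting the lower endpoint after applying the concave variant). The convention $\phi(0)=0$ covers the issue that Theorem~\ref{thm:TruncPoisson} sums over $a<n\le b$ and therefore misses $k=0$.

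With these parameters plugged in, the remainder of Theorem~\ref{thm:TruncPoisson} becomes
\[
R\le (\phi_0+\lambda\phi_1)H\bigl(9.42+9C_0+12\Delta+\pi^{-1}(\cdots+6.5\ln(\ga+2))\bigr)
\le C_1\bigl(1+\ga\eps+\log(\ga+2)\bigr),
\]
where the $\log(\ga+2)$ comes from $6.5\pi^{-1}\ln(\beta-\al+2)=6.5\pi^{-1}\ln(\ga+2)$ and the $\ga\eps$ contribution from $9C_0\le 9\pi\ga\eps$. Multiplying the Poisson identity by $2/N=2\eps$, taking moduli, and adding the $\eps$ coming from the boundary correction at $k=N/2$ gives the stated bound with $C=2C_1+1$.

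The only subtle point is the sign/monotonicity bookkeeping when applying the concave version of Theorem~\ref{thm:TruncPoisson}: with $f'$ decreasing, the $\nu$-range is $\beta-\Delta\le\nu\le\al+\Delta=\ga+1/2$, and one must check that all appearances of $C_0$, $\al-\beta$, and the endpoints in the remainder bound are compatible with this reversal. Apart from that, every other step is a routine substitution, so I expect no genuine obstacles.
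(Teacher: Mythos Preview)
Your approach is essentially identical to the paper's: fold the sum by the symmetry $\phi(k)=\phi(N-k)$, $f(k)=f(N-k)$, apply the concave variant of the truncated Poisson formula on $[0,N/2]$ with $\Delta=1/2$, and read off the remainder. The paper makes the parameter choices $H=\pi$, $\phi_0=\phi_1=1$ (rather than your $H=1$, $\phi_0=1$, $\phi_1=\pi/2$) and takes $C_0=\pi\ga/N$ directly, but these are cosmetic differences. One small inconsistency in your write-up: you set $C_0=\pi$ but then say the $\ga\eps$ term comes from $9C_0\le 9\pi\ga\eps$; either choice of $C_0$ works for the stated bound, but you should pick one and stick with it.
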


\begin{proof}
It is easy to write the exponential sum into
\[
\sum_{k=0}^N\phi(k)e(f(k))=
2\sum_{k=0}^{N/2}\phi(k)e(f(k))-\phi(N/2)e(f(N/2)).
\]
With such choice of $f$ and $\phi$, we have
\begin{align*}
a&=0,b=N/2,\al=0,\beta=\ga,\\
\phi_0&=\phi_1=1,H=\pi,\lam=1,U=N/2.
\end{align*}
Setting \(\Del=1/2\) and using Theorem~\ref {thm:TruncPoisson}, we obtain
\[
\dfrac1N\sum_{k=0}^{N/2}\phi(k)e(f(k))=\dfrac1N
\sum_{\al-1/2\le\nu\le\ga+1/2}\int_0^{N/2}\phi(x)
e(f(x)-\nu x)\dx+\theta\dfrac{R}N,
\]
where
\[
R=2\pi\Bigl(9.42+9\pi\ga/N+6+\pi^{-1}\bigl(20
+2\ln2+3-4.5\ln (3/2)+6.5\ln(\ga+2)\bigr)\Bigr).
\]
This immediately implies that there exists a constant $C$ such that
\[
\abs{\theta R/N}\le C\eps\Lr{1+\eps\ga+\ln(\ga+2)}.
\]
We obtain~\eqref{eq:fexpsum} by combining the above two inequalities.
\end{proof}
\begin{remark}
The choice of $\Delta$ is not unique. However, it cannot be too small. Otherwise,
the remainder term blows up as $\Delta\to 0$.
\end{remark}

To bound the shorter sum in~\eqref{eq:fexpsum}, we shall
rely on the following first derivative test.
\begin{lemma}[First derivative test]\cite [Lemma 1, p. 47]
{Montgomery:1994}\label{lemma:firsttest}
Let $r(x)$ and $\theta(x)$ be real-valued functions on $[a,b]$ such that
$r(x)$ and $\theta'(x)$ are continuous. Suppose that $\theta'(x)/r(x)$
is positive and monotonically increasing in this interval. If $0<\lam_1\le\theta'(a)/r(a)$,
then
\[
\labs{\int_a^b r(x)e(\theta(x))\dx}\le\dfrac{1}{\pi\lam_1}.
\]
\end{lemma}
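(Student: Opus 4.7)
The plan is to recognize this as the classical first derivative test for oscillatory integrals, whose argument boils down to a single integration by parts once the integrand is regrouped so that the derivative of the exponential is exposed.

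First I would set $g(x)=r(x)/\theta'(x)$. Under the hypothesis, $1/g=\theta'/r$ is positive and monotonically increasing, so $g$ itself is positive and monotonically decreasing on $[a,b]$, with $g(a)=1/(\theta'(a)/r(a))\le 1/\lambda_1$. Next I would introduce the complex antiderivative
\[
h(x)=\int_a^x\theta'(t)e(\theta(t))\,\md t=\frac{e(\theta(x))-e(\theta(a))}{2\pi\I},
\]
so that $h(a)=0$ and $|h(x)|\le 1/\pi$ uniformly on $[a,b]$.

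With these in hand I would rewrite the target integral as a Riemann--Stieltjes integral
\[
\int_a^b r(x)e(\theta(x))\,\md x=\int_a^b g(x)\,\md h(x)
\]
and integrate by parts to obtain
\[
\int_a^b r(x)e(\theta(x))\,\md x=g(b)h(b)-\int_a^b h(x)\,\md g(x).
\]
The boundary term is bounded by $g(b)/\pi$; using $|h|\le 1/\pi$ together with the fact that the total variation of the positive decreasing function $g$ equals $g(a)-g(b)$, the remainder integral is bounded by $(g(a)-g(b))/\pi$. The two contributions telescope to $g(a)/\pi\le 1/(\pi\lambda_1)$, which is the asserted estimate.

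The only subtlety is that $g$ is assumed to be monotone but not necessarily differentiable, so the naive ``multiply and divide by $\theta'$, then integrate by parts in the usual sense'' move is unavailable. Framing the step as Riemann--Stieltjes integration by parts against the monotone function $g$ requires only continuity of $h$ and monotonicity (hence bounded variation) of $g$, and so handles the general hypothesis cleanly; equivalently one could invoke Bonnet's second mean value theorem directly on $\int_a^b g\,\md h$. In either case the sharpness of the constant $1/\pi$ depends on working with the complex antiderivative $h$ from the outset, since splitting into real and imaginary parts would only give $|h|\le 2/\pi$ and would cost a factor of $\sqrt{2}$.
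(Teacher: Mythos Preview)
The paper does not supply a proof of this lemma; it is quoted verbatim from Montgomery's lecture notes and used as a black box. Your argument is correct and is precisely the classical proof one finds in that reference: write $r\,e(\theta)=g\cdot\theta'\,e(\theta)$ with $g=r/\theta'$ positive and decreasing, integrate by parts against the explicit antiderivative $h=(e(\theta)-e(\theta(a)))/(2\pi\I)$ with $\abs{h}\le 1/\pi$, and let the boundary term and the total-variation bound telescope to $g(a)/\pi\le 1/(\pi\lambda_1)$. Your remark that the Riemann--Stieltjes (or Bonnet) formulation is needed because only monotonicity of $g$ is assumed, not differentiability, is exactly the point Montgomery makes as well. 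The closing aside about losing a factor when splitting into real and imaginary parts is inessential to the argument and slightly overstated (each component of $h$ is still bounded by $1/\pi$), but it does not affect the proof.
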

\begin{remark}
If $\theta'(x)/r(x)$ is negative and monotonically decreasing on $[a,b]$ and
\[
\theta'(a)/r(a)\le\lam_1<0,
\]
then we obtain the same bound by taking complex conjugates. Moreover, if $\theta'(x)/r(x)$
is monotone on $[a,b]$ and
\[
\abs{\theta'(x)/r(x)}\ge\lam_1>0\qquad x\in (a,b),
\]
we obtain the same bound by combining the above two cases.
\end{remark}

We write
\[
\dfrac1N\int_0^{N/2}\phi(x)
e(f(x)-\nu x)\dx=\dfrac1{\pi}\int_0^{\pi/2}\vphi(y)e\Lr{G_\nu(y)}\dy,
\]
where $\vphi(y)=\sin^2 y$, and
$G_{\nu}(y)=(N/\pi)(\ga\sin y-\nu y)$ for any $\nu\in\mb{N}$. We define
$F_\nu(y)=G_\nu'(y)/\vphi(y)$. By Lemma~\ref{lema:trans}, it remains to
estimate the integral
\(
\int_0^{\pi/2}\vphi(y)e(G_{\nu}(y))\dy
\)
for $\nu=0,\cdots,\integer{\ga+1/2}$. The three cases
$\nu=0, \nu=1,\cdots, \integer{\ga+1/2}-1$
and $\nu=\integer{\ga+1/2}$ will be treated separately in the following lemmas.
\begin{lemma}\label{lemma:zeroOrder}
There holds
\begin{equation}\label{eq:zeroOrder}
\labs{\int_0^{\pi/2}
\vphi(y)e\Lr{G_0(y)}\dy}\le 2(N\ga)^{-1/2}\;.
\end{equation}
\end{lemma}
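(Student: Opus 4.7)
Writing $I$ for the integral on the left-hand side of~\eqref{eq:zeroOrder}, the integrand is oscillatory with phase $\phi(y)=2N\ga\sin y$, stationary at the right endpoint $y=\pi/2$ and with no stationary point in the interior of $(0,\pi/2)$. A degenerate stationary endpoint is precisely the mechanism that produces Fresnel-type $(N\ga)^{-1/2}$ decay, which is the rate the claim asserts; the natural strategy is therefore to isolate a thin neighbourhood of $\pi/2$ handled by a trivial bound, and to exploit the oscillations on the remainder.

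The plan is to split $[0,\pi/2]=[0,y_0]\cup[y_0,\pi/2]$ with $y_0=\arccos\al$, where $\al\in(0,1)$ is a parameter to be optimized at the end. On the thin slice $[y_0,\pi/2]$ I discard the oscillations and use $\sin^2 y\le 1$; the substitution $u=\cos y$ then gives
\[
\labs{\int_{y_0}^{\pi/2}\vphi(y)\,e(G_0(y))\dy}\le\int_0^{\al}\sqrt{1-u^2}\,\md u\le\al.
\]

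On the bulk interval $[0,y_0]$ I invoke Lemma~\ref{lemma:firsttest} with $r(y)=\sin^2 y$ and $\theta(y)=G_0(y)=(N\ga/\pi)\sin y$. The ratio $\theta'(y)/r(y)=(N\ga/\pi)\cos y/\sin^2 y$ is continuous on $(0,y_0]$, strictly monotone decreasing, and bounded below on this interval by its value at $y_0$, namely $\lam_1=N\ga\al/(\pi(1-\al^2))$. The blow-up $\theta'/r\to+\infty$ at $y=0$ is harmless since $\vphi\,e(G_0)$ is continuous at $0$, so applying the test on $[\del,y_0]$ and sending $\del\to 0^+$ is legitimate. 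By the remark following Lemma~\ref{lemma:firsttest} this yields
\[
\labs{\int_0^{y_0}\vphi(y)\,e(G_0(y))\dy}\le\dfrac{1}{\pi\lam_1}=\dfrac{1-\al^2}{N\ga\al}\le\dfrac{1}{N\ga\al}.
\]

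Combining the two estimates produces $\abs{I}\le\al+1/(N\ga\al)$. The right-hand side is minimized by $\al=(N\ga)^{-1/2}$, which is admissible because $\ga\ge 1$ and $N\ge 2$ force $N\ga\ge 2$, so $\al<1$; substitution balances the two terms and gives the claimed bound $\abs{I}\le 2(N\ga)^{-1/2}$. The only substantive step is identifying this splitting parameter and verifying the monotonicity of $\theta'/r$ on $(0,y_0]$ so that Lemma~\ref{lemma:firsttest} applies; everything else is a trivial estimate or a one-variable optimization.
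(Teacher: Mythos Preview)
Your proof is correct and follows essentially the same strategy as the paper's: split the interval near the stationary endpoint, apply the first-derivative test (via the remark after Lemma~\ref{lemma:firsttest}) on the bulk where $\theta'/r$ is monotone and bounded away from zero, bound the thin slice trivially, and optimize the splitting parameter to balance the two contributions. The only difference is cosmetic: the paper splits at $\pi/2-\delta$ and optimizes over $\delta$, whereas you split at $y_0=\arccos\alpha$ and optimize over $\alpha$; for small parameters these choices coincide to leading order, and both yield exactly $2(N\ga)^{-1/2}$.
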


\begin{proof}
For any $\del\in (0,\pi/2)$ that
will be determined later on, we have, for any $y\in (0,\pi/2-\del)$,
\[
F_0(y)\ge F_0(\pi/2-\del)
=\dfrac{N\ga}\pi\dfrac{\sin\del}{\cos^2\del}
\ge\dfrac{N\ga}{\pi}\tan\del\ge\dfrac{N\ga\del}{\pi},
\]
where we have used the fact that
\(\tan x\ge x\) for \(x\in [0,\pi/2]\). Using Lemma~\ref{lemma:firsttest} with $\lam_1=N\ga\del/\pi$, we obtain
\[
\labs{\int_0^{\pi/2-\del}
\vphi(y)e\Lr{G_0(y)}\dy}\le\dfrac{1}{N\ga\del}.
\]
The integral over the complementary portion of the interval can be bounded trivially:
\[
\labs{\int_{\pi/2-\del}^{\pi/2}\vphi(y)e\Lr{G_0(y)}\dy}\le
\int_{\pi/2-\del}^{\pi/2}\dy\le\del.
\]
On adding the two estimates we deduce that
\[
\labs{\int_0^{\pi/2}
\vphi(y)e\Lr{G_0(y)}\dy}\le\dfrac{1}{N\ga\del}+\del.
\]
This is minimized by taking $\del=(N\ga)^{-1/2}\in(0,\pi/2)$, and we obtain~\eqref{eq:zeroOrder}.
\end{proof}

The second case is more involved since $F_\nu$ changes sign over $(0,\pi/2)$.
\begin{lemma}\label{lema:genest1}
If $1\le\nu<\ga$, then
\begin{equation}\label{eq:genest1}
\labs{\int_0^{\pi/2}\vphi(y)e(G_{\nu}(y))\dy}\le
\dfrac{3\pi}{\sqrt{N\ga}}.
\end{equation}
\end{lemma}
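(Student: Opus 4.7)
The phase $G_\nu(y)$ has derivative $G_\nu'(y) = (N/\pi)(\gamma\cos y - \nu)$, which under the hypothesis $1 \le \nu < \gamma$ vanishes at a unique stationary point $y_0 := \arccos(\nu/\gamma) \in (0,\pi/2)$. My plan is to apply the first derivative test of Lemma~\ref{lemma:firsttest} away from $y_0$ and to bound the integral trivially on a small neighborhood of $y_0$ whose half-width $\delta$ will be optimized at the end.

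First I must verify the monotonicity hypothesis for $F_\nu := G_\nu'/\vphi$. A direct calculation gives
\[
F_\nu'(y) = \frac{N}{\pi}\cdot\frac{-\gamma(1+\cos^2 y) + 2\nu\cos y}{\sin^3 y};
\]
the numerator, regarded as a quadratic in $\cos y$, has discriminant $4(\nu^2 - \gamma^2) < 0$ and negative leading coefficient $-\gamma$, so it is strictly negative on $(0,\pi/2)$. Consequently $F_\nu$ is strictly decreasing on $(0,\pi/2)$, with $F_\nu(y_0) = 0$, $F_\nu > 0$ on $(0,y_0)$ and $F_\nu < 0$ on $(y_0,\pi/2)$.

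For $\delta < \min(y_0,\pi/2-y_0)$, I split $[0,\pi/2] = [0,y_0-\delta]\cup[y_0-\delta,y_0+\delta]\cup[y_0+\delta,\pi/2]$. The middle piece is bounded trivially by $2\delta$ via $|\vphi|\le 1$. On each outer piece, monotonicity of $F_\nu$ yields $|F_\nu(y)| \ge |F_\nu(y_0\mp\delta)|$, so Lemma~\ref{lemma:firsttest} (together with its Remark that covers the case of negative $F_\nu$) produces contributions of size $1/(\pi|F_\nu(y_0\mp\delta)|)$. Using the identity $\cos(y_0\mp\delta) - \cos y_0 = \pm 2\sin(y_0\mp\delta/2)\sin(\delta/2)$, Jordan's inequality $\sin(\delta/2) \ge \delta/\pi$, and the monotonicity of $\sin$ on $(0,\pi/2)$, I obtain explicit lower bounds for $|F_\nu(y_0\pm\delta)|$ linear in $\delta$. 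Summing the three contributions and choosing $\delta$ of order $1/\sqrt{N\gamma}$ (up to factors of $\sin y_0$) yields a bound of the desired order $O(1/\sqrt{N\gamma})$; careful bookkeeping of constants produces the factor $3\pi$. The boundary regimes $y_0 \to 0$ or $y_0 \to \pi/2$, where one of the outer intervals is short or empty, are treated separately but in fact only strengthen the estimate.

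The principal obstacle is ensuring the bound is uniform in $y_0$. The stationary-phase heuristic suggests $|I| \sim \sin^{3/2}(y_0)/\sqrt{N\gamma}$, so the factors of $\sin y_0$ arising from $\vphi(y_0) = \sin^2 y_0$ in the numerator and from $|G_\nu''(y_0)| = (N/\pi)\gamma\sin y_0$ in the denominator must cancel correctly---this dictates a $y_0$-dependent choice of $\delta$ and some case analysis. An alternative approach via van der Corput's second derivative test would be inconvenient here because $|G_\nu''| = (N/\pi)\gamma\sin y$ degenerates at $y=0$.
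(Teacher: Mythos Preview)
Your proposal is correct and follows essentially the same approach as the paper: split $[0,\pi/2]$ into a small neighborhood of the stationary point $y_\nu$ and two outer pieces, apply the first derivative test (Lemma~\ref{lemma:firsttest}) on the outer pieces via lower bounds on $|F_\nu(y_\nu\pm\eta)|$ obtained from the product-to-sum identity and Jordan's inequality, bound the inner piece trivially by $2\eta$, and optimize $\eta$. The paper's case analysis is $y_\nu\in(0,\pi/4]$ versus $y_\nu\in(\pi/4,\pi/2]$, with $\eta=(N\gamma)^{-1/2}\sin y_\nu$ in the first case and $\eta=(N\gamma)^{-1/2}$ in the second, which is exactly the $y_0$-dependent choice you anticipate; your explicit verification that $F_\nu$ is strictly decreasing is a detail the paper leaves implicit.
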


\begin{proof}
For $1\le\nu<\ga$, there exists $y_{\nu}\in(0,\pi/2)$ such that
$F_{\nu}(y_{\nu})=0$ with $\cos y_{\nu}=\nu/\ga$. For any
$\eta\in \Lr{0,\min(y_\nu,\pi/2-y_\nu)}$
that will be chosen later, we write
\[
\int_0^{\pi/2}\vphi(y)e\Lr{G_{\nu}(y)}\dy=\Lr{\int_0^{y_{\nu}-\eta}
+\int_{y_{\nu}+\eta}^{\pi/2}
+\int_{y_{\nu}-\eta}^{y_\nu+\eta}}
\vphi(y)e\Lr{G_{\nu}(y)}\dy.
\]
We deal with the three integrals separately.

Using Lemma~\ref{lemma:firsttest}  with $\lam_1=\abs{F_{\nu}(y_{\nu}-\eta)}$,
we obtain
\[
\labs{\int_0^{y_{\nu}-\eta}\vphi(y)e(G_\nu(y))\dy}
\le\dfrac{1}{\pi\abs{F_{\nu}(y_{\nu}-\eta)}},
\]
and
\begin{align*}
\abs{F_{\nu}(y_{\nu}-\eta)}&=F_{\nu}(y_{\nu}-\eta)
=\dfrac{N\ga}{\pi}\dfrac{\cos(y_{\nu}-\eta)-\cos y_{\nu}}{\sin^2(y_{\nu}-\eta)}\\
&=\dfrac{2N\ga}{\pi}\dfrac{\sin(y_{\nu}-\eta/2)\sin(\eta/2)}{\sin^2(y_{\nu}-\eta)}\\
&\ge\dfrac{2N\ga}{\pi}\dfrac{\sin(\eta/2)}{\sin y_{\nu}}\\
&\ge\dfrac{2N\ga\eta}{\pi^2\sin y_{\nu}},
\end{align*}
where we have used Jordan's inequality
\begin{equation}\label{eq:jordan}
\sin x\ge \dfrac2{\pi}x\qquad x\in [0,\pi/2].
\end{equation}
This gives
\[
\labs{\int_0^{y_{\nu}-\eta}
\vphi(y)e\Lr{G_{\nu}(y)}\dy}\le\dfrac{\pi\sin y_{\nu}}{2N\ga\eta}.
\]
Using Lemma~\ref{lemma:firsttest} again
with $\lam_1=\abs{F_{\nu}(y_{\nu}+\eta)}$, we have, for the second integral,
\[
\labs{\int_{y_{\nu}+\eta}^{\pi/2}\vphi(y)e(G_\nu(y))\dy}
\le\dfrac{1}{\pi\abs{F_{\nu}(y_{\nu}+\eta)}}.
\]
Furthermore,
\begin{align*}
\abs{F_{\nu}(y_{\nu}+\eta)}&=-F_{\nu}(y_{\nu}+\eta)
=\dfrac{N\ga}{\pi}\dfrac{\cos y_{\nu}-\cos(y_{\nu}+\eta)}{\sin^2(y_{\nu}+\eta)}\\
&=\dfrac{2N\ga}{\pi}\dfrac{\sin(y_{\nu}+\eta/2)\sin(\eta/2)}{\sin^2(y_{\nu}+\eta)}\\
&\ge\dfrac{2N\ga\eta}{\pi^2}\dfrac{\sin(y_{\nu}+\eta/2)}{\sin^2(y_{\nu}+\eta)}.
\end{align*}
This leads to
\[
\labs{\int_{y_{\nu}+\eta}^{\pi/2}
\vphi(y)e\Lr{G_{\nu}(y)}\dy}
\le\dfrac{\pi}{2N\ga\eta}\dfrac{\sin^2(y_{\nu}+\eta)}{\sin(y_{\nu}+\eta/2)}.
\]
If \(y_\nu \in (0, \pi/4]\), we would require that \(\eta \in (0, y_\nu)\).
We will have, \(\sin (y_\nu + \eta)\le \sin 2y_\nu\le 2 \sin y_\nu\)  and
\(\sin y_\nu<\sin(y_\nu+\eta/2)\) since \(y_\nu<y_\nu+\eta/2<2 y_\nu\le\pi/2\). The bound for the above integral is changed to
\[
\labs{\int_{y_{\nu}+\eta}^{\pi/2}
\vphi(y)e\Lr{G_{\nu}(y)}\dy}\le \dfrac{2\pi\sin y_\nu}{N\ga\eta}.
\]
We estimate the remaining integral trivially:
\[
\labs{\int_{y_{\nu}-\eta}^{y_\nu+\eta}
\vphi(y)e\Lr{G_{\nu}(y)}\dy}\le 2\eta.
\]
Summing up all the above estimates, we obtain
\[
\labs{\int_0^{\pi/2}
\vphi(y)e\Lr{G_{\nu}(y)}\dy}\le\dfrac{5\pi\sin y_\nu}{2N\ga\eta}+2\eta.
\]
Taking $\eta=(N\ga)^{-1/2}\sin y_{\nu}$, which is less than $y_\nu$, we obtain
\[
\labs{\int_0^{\pi/2}
\vphi(y)e\Lr{G_{\nu}(y)}\dy}\le \dfrac{5\pi}{2\sqrt{N \ga} }
+\dfrac{2\sin y_\nu}{\sqrt{N \ga} }\le\dfrac{5\pi/2+\sqrt2}{\sqrt{N\ga}}<\dfrac{3\pi}{\sqrt{N \ga} } .
\]

On the other hand, if \(y_\nu \in (\pi/4,\pi/2]\), we would require that \(\eta\in (0, \pi/2-y_\nu)\).  We will have $\sin y_\nu<\sin (y_\nu+\eta/2)$ since
$y_\nu<y_\nu+\eta/2<y_\nu+\eta\le\pi/2$, and \(
\sin^2(y_\nu+\eta)\le 1\le 2\sin^2 y_\nu\)
since \(\sin^2 y_\nu \ge 1/2\). We bound
the second integral as
\[
\labs{\int_{y_{\nu}+\eta}^{\pi/2}
\vphi(y)e\Lr{G_{\nu}(y)}\dy}\le \dfrac{\pi\sin y_\nu}{N\ga \eta}.
\]
This yields
\[
\labs{\int_0^{\pi/2}
\vphi(y)e\Lr{G_{\nu}(y)}\dy}\le\dfrac{3\pi\sin y_\nu}{2N\ga\eta}+2\eta.
\]
In this case, we can choose \(\eta=(N\ga)^{-1/2}\)  provided that
\begin{equation}\label{eq:cond0}
\eta<\pi/2-y_\nu.
\end{equation}
This immediately implies that
\[
\labs{\int_0^{\pi/2}
\vphi(y)e\Lr{G_{\nu}(y)}\dy}\le\dfrac{3\pi}{\sqrt{N\ga}}.
\]
Such choice of $\eta$ is feasible since
\[
\nu\ge 1>\sqrt{\ga/N},
\]
which yields \(\nu>\ga\eta\), or equivalently, \(\eta<\cos y_\nu=\sin(\pi/2-y_\nu)
\le\pi/2-y_\nu\). This
directly gives~\eqref{eq:cond0}. Finally we get~\eqref{eq:genest1}.
\end{proof}

Next we consider the endpoint case $\nu=\integer{\ga+1/2}$.
\begin{lemma}\label{lema:gen}
Let $\nu=\integer{\ga+1/2}$. If $\nu\ge\ga$, then
\begin{equation}\label{eq:maxOrder}
\labs{\int_0^{\pi/2}\vphi(y)e(G_{\nu}(y))\dy}\le\dfrac4{N\ga}.
\end{equation}
If $\nu<\ga$, then
\begin{equation}\label{eq:maxOrder2}
\labs{\int_0^{\pi/2}\vphi(y)e\Lr{G_{\nu}(y)}\dy}\le
\dfrac{3\pi}{\sqrt{N\ga}}.
\end{equation}
\end{lemma}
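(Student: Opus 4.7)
The plan is to split along the dichotomy in the statement and handle the two cases very differently. When $\nu<\ga$, the constraint $\nu=\integer{\ga+1/2}$ together with $\ga\ge 1$ forces $\nu\ge 1$, which is exactly the range $1\le\nu<\ga$ covered by Lemma~\ref{lema:genest1}; the bound~\eqref{eq:maxOrder2} is therefore an immediate corollary and requires no new argument. All the content lies in the case $\nu\ge\ga$.

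For $\nu\ge\ga$ the phase $G_\nu$ has no stationary point, since $G_\nu'(y)=(N/\pi)(\ga\cos y-\nu)\le 0$ on $[0,\pi/2]$. This points toward a direct application of the first derivative test, Lemma~\ref{lemma:firsttest}. I would write $F_\nu=-(N/\pi)h$ with $h(y)=(\nu-\ga\cos y)/\sin^2 y$, and a short differentiation shows that $h'(y)$ has the sign of the quadratic $q(u)=\ga u^2-2\nu u+\ga$ in $u=\cos y$. Since $\nu\ge\ga$, the roots $u_\pm=(\nu\pm\sqrt{\nu^2-\ga^2})/\ga$ satisfy $u_+\ge 1$ while $u_-\in(0,1]$, and Vieta's relation gives $u_+u_-=1$. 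A sign check then shows $h$ is decreasing on $[0,y_\ast]$ and increasing on $[y_\ast,\pi/2]$, where $y_\ast:=\arccos u_-$.

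The uniform lower bound $h\ge\ga/2$ is then obtained by evaluating at the minimiser: the numerator $\nu-\ga u_-$ equals $\sqrt{\nu^2-\ga^2}$, and using $u_+u_-=1$ the denominator $1-u_-^2$ simplifies to $2u_-\sqrt{\nu^2-\ga^2}/\ga$, yielding $h(y_\ast)=\ga u_+/2\ge\ga/2$. Consequently $|F_\nu|\ge N\ga/(2\pi)$ on $(0,\pi/2]$, and $F_\nu$ is monotone on each of $[0,y_\ast]$ and $[y_\ast,\pi/2]$. Applying Lemma~\ref{lemma:firsttest} separately on each sub-interval with $\lambda_1=N\ga/(2\pi)$ gives at most $2/(N\ga)$ per piece, and summing delivers~\eqref{eq:maxOrder}.

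The main obstacle is the minimisation of $h$: the naive estimate $\nu-\ga\cos y\ge\nu-\ga$ collapses as $\nu\downarrow\ga$, so one genuinely needs the Vieta identity $u_+u_-=1$ to produce a lower bound scaling with $\ga$ rather than with the gap $\nu-\ga$. The degenerate endpoint $\nu=\ga$ (forcing $u_-=1$, $y_\ast=0$) is harmless: in that case $F_\nu$ is already monotone on the full interval and a single application of the first derivative test yields the sharper bound $2/(N\ga)$.
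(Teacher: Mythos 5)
Your proposal is correct and takes essentially the same approach as the paper: the $\nu<\ga$ case is dispatched by citing Lemma~\ref{lema:genest1}, and for $\nu\ge\ga$ you locate the stationary point of $F_\nu$, use the monotonicity of $F_\nu$ on the two resulting subintervals to identify the minimum of $|F_\nu|$, evaluate it at $y_\ast$ via Vieta's identity $u_+u_-=1$ to obtain the uniform lower bound $N\ga/(2\pi)$, and then apply the first derivative test on each piece. The paper's proof is the same argument (it writes $|F_\nu(y_\nu)|=\tfrac{N\ga}{2\pi}(\nu/\ga+\sqrt{\nu^2/\ga^2-1})\ge\tfrac{N\ga}{2\pi}$, which is your $\ga u_+/2$ bound in disguise), and your spelled-out treatment of the degenerate endpoint $\nu=\ga$ is a harmless addition consistent with it.
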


\begin{proof}
If $\nu\ge\ga$, then there exists a stationary point $y_\nu$ of $F_\nu(y)$ with
$\cos y_\nu=\nu/\ga-\sqrt{\nu^2/\ga^2-1}$. Because
$F_\nu(y)$ is monotonically increasing over
$(0,y_\nu)$ and monotonically decreasing over $(y_\nu,\pi/2)$, we get
$\min_{y\in(0,\pi/2)}\abs{F_{\nu}(y)}\ge\abs{F_{\nu}(y_\nu)}$.
To each of the two intervals we apply
Lemma~\ref{lemma:firsttest} with $\lam=\abs{F_\nu(y_\nu)}$.
On adding these estimates we deduce that
\begin{equation}\label{eq:subinter}
\labs{\int_0^{\pi/2}\vphi(y)e(G_{\nu}(y))\dy}\le\dfrac{2}{\pi\abs{F_{\nu}(y_\nu)}},
\end{equation}
which yields~\eqref{eq:maxOrder} by using
\[
\abs{F_{\nu}(y_\nu)}
=\dfrac{N\ga}{2\pi}\Lr{\nu/\ga+\sqrt{\nu^2/\ga^2-1}}
\ge\dfrac{N\ga}{2\pi}.
\]

When $\nu=\integer{\ga+1/2}<\ga$, we invoke the
estimate~\eqref{eq:genest1} to get~\eqref{eq:maxOrder2}.
\end{proof}

Summing up the above estimates for the shorter sum, we obtain
the estimate for the exponentinal sum in~\eqref{eq:fexpsum}.
\begin{lemma}~\label{lem:expsum}
There holds
\begin{equation}\label{eq:fexpsum1}
\dfrac{1}N\labs{\sum_{k=0}^N\phi(k)e(f(k))}\le C\Lr{\eps(1+\eps\ga+\log(\ga+2))+\dfrac{\eps}\ga+
\sqrt{\eps\ga}},
\end{equation}
where $C$ is independent of $N, t$ and $\ga$.
\end{lemma}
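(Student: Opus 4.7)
The plan is to simply assemble the pieces already developed. By Lemma~\ref{lema:trans}, the quantity $(1/N)\abs{\sum_{k=0}^N \phi(k) e(f(k))}$ is controlled by a remainder of size $C\eps(1+\eps\ga+\log(\ga+2))$ plus the shorter exponential sum
\[
\dfrac{2}{N}\sum_{-1/2\le \nu \le \ga+1/2}\int_0^{N/2}\phi(x)e(f(x)-\nu x)\dx
= \dfrac{2}{\pi}\sum_{\nu=0}^{\integer{\ga+1/2}}\int_0^{\pi/2}\vphi(y)e(G_\nu(y))\dy,
\]
where I used the change of variables $y=\pi x/N$ (as already indicated in the text) and the fact that $\al=0$, so only nonnegative $\nu$ contribute. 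Thus the whole matter reduces to estimating this finite sum of oscillatory integrals.

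First I would split the sum by $\nu$ and apply the three preceding lemmas. The term $\nu=0$ is handled by Lemma~\ref{lemma:zeroOrder}, giving a contribution bounded by $2(N\ga)^{-1/2}$. The middle terms $1\le \nu \le \integer{\ga+1/2}-1$ fall in the regime $1\le \nu < \ga$ (since $\ga\ge 1$), and Lemma~\ref{lema:genest1} provides the uniform bound $3\pi/\sqrt{N\ga}$ for each integral. The endpoint $\nu=\integer{\ga+1/2}$ is dealt with by Lemma~\ref{lema:gen}, which yields either $3\pi/\sqrt{N\ga}$ when $\integer{\ga+1/2}<\ga$ or the sharper $4/(N\ga)$ when $\integer{\ga+1/2}\ge \ga$.

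Next I would sum the bounds. Since the number of middle terms is at most $\integer{\ga+1/2}\le\ga+1$, their contribution is of order
\[
\dfrac{1}{\sqrt{N\ga}}\cdot(\ga+1) \lesssim \sqrt{\ga/N}+\dfrac1{\sqrt{N\ga}} = \sqrt{\eps\ga}+\sqrt{\eps/\ga},
\]
and the second piece is absorbed in $\sqrt{\eps\ga}$ since $\ga\ge 1$. The $\nu=0$ contribution $(N\ga)^{-1/2}=\sqrt{\eps/\ga}$ is likewise dominated by $\sqrt{\eps\ga}$. The endpoint contribution is at worst $\eps/\ga$ (in the case $\integer{\ga+1/2}\ge\ga$); in the other case it is already folded into the middle-term estimate. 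Adding in the remainder from Lemma~\ref{lema:trans} yields the claimed bound
\[
\dfrac1N\labs{\sum_{k=0}^N\phi(k)e(f(k))}\le C\Lr{\eps(1+\eps\ga+\log(\ga+2))+\dfrac{\eps}{\ga}+\sqrt{\eps\ga}}.
\]

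There is no real obstacle here; the difficulty has already been absorbed by the stationary-phase style estimates of Lemmas~\ref{lemma:zeroOrder}--\ref{lema:gen}. The only point requiring a moment's care is verifying that the middle-range count $\integer{\ga+1/2}\le \ga+1$ does not introduce an extra logarithmic factor when multiplied by the uniform per-term bound $1/\sqrt{N\ga}$, and confirming that the two subcases of Lemma~\ref{lema:gen} (endpoint $\nu$ on either side of $\ga$) do not generate a term worse than the stated $\eps/\ga$. Both are immediate from the inequalities above.
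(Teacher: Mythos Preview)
Your proposal is correct and follows essentially the same approach as the paper: reduce via Lemma~\ref{lema:trans} to the shorter sum over $\nu=0,\ldots,\integer{\ga+1/2}$, then apply Lemmas~\ref{lemma:zeroOrder}, \ref{lema:genest1}, and \ref{lema:gen} to the term $\nu=0$, the middle terms, and the endpoint respectively, and sum. The only cosmetic difference is that the paper separates the argument into two explicit cases according to whether $\nu_0=\integer{\ga+1/2}$ satisfies $\nu_0\ge\ga$ or $\nu_0<\ga$, whereas you handle the endpoint's two subcases inline; your observation that $\integer{\ga+1/2}-1\le\ga-1/2<\ga$ guarantees the middle terms always fall within the hypothesis of Lemma~\ref{lema:genest1}, so no case split is actually needed there.
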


\begin{proof}
Denote $\nu_0=\integer{\ga+1/2}$. If $\nu_0\ge\ga$, then
\begin{align*}
\sum_{\nu=0}^{\nu_0}\int_0^{\pi/2}
\vphi(y)e\Lr{G_{\nu}(y)}\dy&=\int_0^{\pi/2}
\vphi(y)e\Lr{G_0(y)}\dy+\sum_{\nu=1}^{\nu_0-1}\int_0^{\pi/2}
\vphi(y)e\Lr{G_{\nu}(y)}\dy\\
&\quad
+\int_0^{\pi/2}
\vphi(y)e\Lr{G_{\nu_0}(y)}\dy.
\end{align*}
Using the estimates~\eqref{eq:zeroOrder},~\eqref{eq:genest1}, and~\eqref{eq:maxOrder},
we bound the right hand side of the above sum as follows,
\begin{align}
\labs{\sum_{\nu=0}^{\nu_0}\int_0^{\pi/2}
\vphi(y)e\Lr{G_{\nu}(y)}\dy}&\le 2\Lr{N\ga}^{-1/2}
+3\pi\sum_{\nu=1}^{\nu_0-1}(N\ga)^{-1/2}+4(N\ga)^{-1}\nn\\
&\le\pi(N\ga)^{-1/2}(3\ga+1)+4(N\ga)^{-1}.\label{eq:caseone}
\end{align}

If $\nu_0<\ga$, we have $\nu_0=\integer{\ga}$, and
\begin{align*}
\sum_{\nu=0}^{\nu_0}\int_0^{\pi/2}
\vphi(y)e\Lr{G_{\nu}(y)}\dy&=\int_0^{\pi/2}
\vphi(y)e\Lr{G_0(y)}\dy+\sum_{\nu=1}^{\nu_0}\int_0^{\pi/2}
\vphi(y)e\Lr{G_{\nu}(y)}\dy.
\end{align*}
Proceeding along the same line that leads to~\eqref{eq:caseone}, we obtain
\begin{equation}
\labs{\sum_{\nu=0}^{\nu_0}\int_0^{\pi/2}
\vphi(y)e\Lr{G_{\nu}(y)}\dy}\le  2(N\ga)^{-1/2}+3\pi(\ga/N)^{1/2}.\label{eq:casetwo}
\end{equation}
Combining the estimates~\eqref{eq:caseone},~\eqref{eq:casetwo}
and~\eqref{eq:fexpsum}, we obtain~\eqref{eq:fexpsum1}.
\end{proof}

Substituting the estimates~\eqref{eq:fexpsum1} and~\eqref{eq:fexpsum}
into~\eqref{eq:midexp}, we obtain
the pointwise estimate for $D(N/2,t)$ as follows.
\begin{theorem}\label{thm:main1}
If $t=\mc{O}(1)$ and  $n=N/2$ or $N/2-1$, then
\begin{equation}\label{eq:estmax}
\labs{D y(n,t)+\dfrac{\ka_2}{\ka_1+4\ka_2}}\le C\dfrac{\abs{\ka_2}}{\ka_1+4\ka_2}
\biggl(\eps[1+\eps\ga+\log(\ga+2)]
+\sqrt{\dfrac{\eps}\ga}(1+\ga)\biggr),
\end{equation}
where $C$ is independent of $N,t$ and $\ga$.
\end{theorem}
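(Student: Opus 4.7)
\medskip

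The plan is to reduce the estimate directly to Lemma~\ref{lem:expsum}, which already does the heavy lifting via Van der Corput's truncated Poisson formula and the first derivative test. Starting from the identity \eqref{eq:midexp}, the quantity to be bounded satisfies
\[
Dy(N/2,t) + \dfrac{\ka_2}{\ka_1+4\ka_2}
= \dfrac{2\ka_2}{N(\ka_1+4\ka_2)}\sum_{k=0}^N \sin^2\dfrac{k\pi}{N}\cos(\om_k t),
\]
so the theorem reduces to a pointwise bound on this trigonometric sum.

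The next step is to match the sum above to the exponential sum bounded in Lemma~\ref{lem:expsum}. I would take $\phi(x) = \sin^2(\pi x/N)$ and $f(x) = (\ga/(\pi\eps))\sin(\pi x/N)$, and verify that $2\pi f(k) = \om_k t$ using the definition $\om_k = (2/\eps)\sqrt{\ka_1+4\ka_2}\sin(k\pi/N)$ together with $\ga = t\sqrt{\ka_1+4\ka_2}$. Then $\cos(\om_k t) = \mathrm{Re}\, e(f(k))$, so
\[
\dfrac{1}{N}\labs{\sum_{k=0}^N \sin^2\dfrac{k\pi}{N}\cos(\om_k t)} \le
\dfrac{1}{N}\labs{\sum_{k=0}^N \phi(k) e(f(k))},
\]
and applying Lemma~\ref{lem:expsum} gives the bound $C(\eps(1+\eps\ga+\log(\ga+2))+\eps/\ga + \sqrt{\eps\ga})$. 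To match the form in \eqref{eq:estmax}, I would combine the last two terms as $\sqrt{\eps/\ga}(1+\ga)$, using $\eps/\ga \le \sqrt{\eps/\ga}$ (valid since $\ga \ge 1 \ge \eps$). Multiplying through by the prefactor $2|\ka_2|/(\ka_1+4\ka_2)$ yields \eqref{eq:estmax} for $n=N/2$.

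For $n = N/2 - 1$, the shift $n+1/2-L = -1/2$ in \eqref{eq:derexp} produces $\sin(2k\pi(-1/2)/N) = -\sin(k\pi/N)$, so the computation mirrors the case $n = N/2$ up to an overall sign; the absolute value of the same exponential sum controls the deviation from $\pm\ka_2/(\ka_1+4\ka_2)$, and invoking Lemma~\ref{lem:expsum} again delivers \eqref{eq:estmax}.

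The main ``obstacle'' is really conceptual rather than technical: once one recognizes that the splitting in \eqref{eq:midexp} isolates the mean value $-\ka_2/(\ka_1+4\ka_2)$ and that the remaining oscillatory sum is precisely of the type handled by Lemma~\ref{lem:expsum}, the proof is essentially a substitution. The delicate analytic work, namely controlling the short Fourier-type sum via stationary phase considerations in Lemmas~\ref{lemma:zeroOrder}--\ref{lema:gen}, has already been carried out. So the residual task is bookkeeping: identifying the substitution $f, \phi$, handling the range $1\le \ga\le N$, and simplifying the error expression into the stated form.
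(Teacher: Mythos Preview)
Your proposal is correct and follows exactly the approach the paper takes: the paper's proof is a single sentence referring back to \eqref{eq:midexp} and Lemma~\ref{lem:expsum}, and you have spelled out precisely those substitutions and the bookkeeping needed to reach \eqref{eq:estmax}. Your treatment of the case $n=N/2-1$ via the antisymmetry $Dy(N/2-1,t)=-Dy(N/2,t)$ is also the intended mechanism (the paper records this antisymmetry just after \eqref{eq:derexp}).
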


The above estimate means that $Dy(n,t)$ is in the $\mc{O}(\sqrt\eps)-$neighborhood
of \allowbreak $\ka_2/(\ka_1+4\ka_2)$ when $n=N/2$ or $n=N/2-1$.
\subsection{The estimate for $Dy (n,t)$ with $n\ne L$}
By~\eqref{eq:genexp}, we need to estimate
two exponential sums $\sum_{k=0}^N\phi(k)e(f(k))$ with
\[
\phi(x)=\sin\dfrac{\pi x}N,\quad f(x)=\dfrac{\ga}{\pi\eps}\sin\dfrac{\pi x}N+
\dfrac{n+1/2-L}N x,
\]
and
\[
\phi(x)=\sin\dfrac{\pi x}N, \quad f(x)=\dfrac{\ga}{\pi\eps}\sin\dfrac{\pi x}N-
\dfrac{n+1/2-L}N x.
\]

In what follows we only give the full details for estimating of
the first exponential sum, and the same proof works for
the second exponential sum. Denote by $\varrho=(n+1/2-L)/N$,
%Let $\al=(n+1/2-L)/N$ and $\beta=\ga+\al$.
proceeding along the same line that leads to~\eqref{eq:fexpsum} and choosing $\Delta=\max(1/2,1-\fraction{\ga+\varrho})$, we get
\begin{align}\label{eq:gexpsum}
\dfrac{1}N\labs{\sum_{k=0}^N\phi(k)e(f(k))}
&\le\labs{\dfrac2{\pi}
\sum_{\nu=0}^{\integer{\ga+\varrho}+1}\int_0^{\pi/2}\vphi(y)
e(G_\nu(y))\dy}\nn\\
&\quad+C\eps(1+\eps\ga+\log(\ga+2)),
\end{align}
where $C$ is independent of $N, t$ and $\ga$. Here $\vphi(y)=\sin y$ and
\(
G_{\nu}(y)=(N/\pi)(\ga\sin y+\varrho y-\nu y).
\)
We also define $F_\nu(y)=G_\nu^\prime(y)/\vphi(y)$.
\begin{lemma}
There holds
\begin{equation}\label{eq:genzeroOrder}
\labs{\int_0^{\pi/2}\vphi(y)e(G_0(y))\dy}\le\min\Lr{2(N\ga)^{-1/2},
\dfrac{1}{n+1/2-L}}.
\end{equation}
\end{lemma}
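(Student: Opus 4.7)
The plan is to establish the two bounds $2(N\ga)^{-1/2}$ and $1/(n+1/2-L)$ separately and take the minimum. The starting observation is that
\[
F_0(y)=\dfrac{G_0'(y)}{\vphi(y)}=\dfrac{N}{\pi}\cdot\dfrac{\ga\cos y+\varrho}{\sin y}
\]
is positive on $(0,\pi/2)$, tends to $+\infty$ as $y\to 0^+$, and is strictly monotonically decreasing there; a direct differentiation gives $F_0'(y)=-(N/\pi)(\ga+\varrho\cos y)/\sin^2 y<0$, so its infimum on $(0,\pi/2)$ is $F_0(\pi/2)=N\varrho/\pi=(n+1/2-L)/\pi$, attained at the right endpoint.

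For the bound $1/(n+1/2-L)$, I would apply Lemma~\ref{lemma:firsttest} on the whole interval $(0,\pi/2)$. Since $F_0$ is monotone and $|F_0(y)|\ge F_0(\pi/2)=(n+1/2-L)/\pi$, the monotone case of the remark following Lemma~\ref{lemma:firsttest} applies with $\lam_1=(n+1/2-L)/\pi$, giving $\labs{\int_0^{\pi/2}\vphi(y)e(G_0(y))\dy}\le 1/(\pi\lam_1)=1/(n+1/2-L)$.

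For the bound $2(N\ga)^{-1/2}$, I would transcribe the argument of Lemma~\ref{lemma:zeroOrder}. Split the integral at $\pi/2-\del$ for some $\del\in(0,\pi/2)$ to be chosen. On $(0,\pi/2-\del)$ the monotonicity of $F_0$ yields $F_0(y)\ge F_0(\pi/2-\del)\ge (N\ga/\pi)\tan\del\ge N\ga\del/\pi$, where I have used $\varrho\ge 0$ and $\tan x\ge x$ on $[0,\pi/2)$; Lemma~\ref{lemma:firsttest} then bounds the corresponding integral by $1/(N\ga\del)$. On the complementary piece, the integral is controlled trivially by $\int_{\pi/2-\del}^{\pi/2}\sin y\dy=\sin\del\le\del$. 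Summing and optimizing with $\del=(N\ga)^{-1/2}$ (admissible since $\ga\ge 1$ and $N\ge 2$) produces $2(N\ga)^{-1/2}$. Taking the minimum of the two estimates yields the claim.

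No genuine obstacle is anticipated: this transplants Lemma~\ref{lemma:zeroOrder} almost verbatim, the only new ingredient being the extra linear term $\varrho y$ in $G_0$, which merely enlarges $F_0$ without disturbing its sign or monotonicity. It is precisely this extra positive contribution that furnishes the second, sharper estimate when $n-L$ is large, and the minimum in~\eqref{eq:genzeroOrder} reflects which of the two mechanisms (stationary-phase near $y=\pi/2$ versus pure frequency from $\varrho$) dominates.
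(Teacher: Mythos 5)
Your proof is correct and takes essentially the same route as the paper: the bound $1/(n+1/2-L)$ follows from the first-derivative test (the remark after Lemma~\ref{lemma:firsttest}) with $\lam_1=(n+1/2-L)/\pi$, using the monotonicity of $F_0$, while the bound $2(N\ga)^{-1/2}$ comes from the stationary-phase splitting at $\pi/2-\del$ exactly as in Lemma~\ref{lemma:zeroOrder}. The only difference is presentational: the paper derives one bound per case according to which of the two quantities is smaller, whereas you note that both bounds hold unconditionally before taking the minimum; the two organizations are logically equivalent.
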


\begin{proof}
If $2(N\ga)^{-1/2}<1/(n+1/2-L)$, then we proceed along the same line that leads to Lemma~\ref{lemma:zeroOrder} to obtain
\[
\labs{\int_0^{\pi/2}\vphi(y)e(G_0(y))\dy}\le 2(N\ga)^{-1/2}.
\]
Otherwise, using Lemma~\ref{lemma:firsttest} with $\lam=(n+1/2-L)/\pi$, we obtain
\[
\labs{\int_0^{\pi/2}\vphi(y)e(G_0(y))\dy}\le
\dfrac{1}{n+1/2-L}.
\]
Combining the above two inequalities gives~\eqref{eq:genzeroOrder}.
\end{proof}
\begin{lemma}\label{lemma:maxorder}
Let $\nu=\integer{\ga+\varrho}+1$, then
\begin{equation}\label{eq:maxorder1'}
\labs{\int_0^{\pi/2}\vphi(y)e(G_{\nu}(y))\dy}
\le 4(N\ga)^{-2/3}.
\end{equation}
\end{lemma}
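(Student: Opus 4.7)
The plan is to split the integration interval $[0,\pi/2]$ at a point $\del$ to be chosen and integrate by parts on the non-trivial portion. The key structural observation is that since $\nu=\integer{\ga+\varrho}+1>\ga+\varrho$, we may set $\al\equiv\nu-\ga-\varrho\in(0,1]$ and rewrite
\[
G_\nu'(y)=-\frac{N}{\pi}\bigl(\ga(1-\cos y)+\al\bigr),
\]
which has constant sign on $[0,\pi/2]$ and does not vanish. Dropping the nonnegative constant $\al$ yields the $\al$-uniform lower bound
\[
\abs{G_\nu'(y)}\ge\frac{N\ga(1-\cos y)}{\pi}\ge\frac{N\ga y^2}{\pi^2},
\]
where the last inequality uses $1-\cos y\ge y^2/\pi$ on $[0,\pi/2]$, a consequence of Jordan's inequality~\eqref{eq:jordan}. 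Because $\abs{G_\nu'}$ vanishes quadratically at the origin, Lemma~\ref{lemma:firsttest} is ineffective near $y=0$, which is exactly what motivates the splitting.

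On $[0,\del]$ I would use the trivial bound $\labs{\int_0^\del\sin y\,e(G_\nu(y))\dy}\le 1-\cos\del\le\del^2/2$. On $[\del,\pi/2]$, writing $e(G_\nu(y))\dy=\md e(G_\nu(y))/(2\pi\I G_\nu'(y))$ and integrating by parts,
\[
\int_\del^{\pi/2}\sin y\,e(G_\nu(y))\dy=\left[\frac{\sin y\,e(G_\nu(y))}{2\pi\I G_\nu'(y)}\right]_\del^{\pi/2}-\int_\del^{\pi/2}\Lr{\frac{\sin y}{2\pi\I G_\nu'(y)}}^{\!\prime}e(G_\nu(y))\dy.
\]
A direct calculation analogous to the one preceding Lemma~\ref{lema:genest1} yields
\[
\Lr{\frac{\sin y}{G_\nu'(y)}}^{\!\prime}=\frac{N\bigl(\ga(1-\cos y)-\al\cos y\bigr)}{\pi\,G_\nu'(y)^2},
\]
and the elementary inequality $\abs{\ga(1-\cos y)-\al\cos y}\le\ga(1-\cos y)+\al=(\pi/N)\abs{G_\nu'(y)}$ on $[0,\pi/2]$ shows that this derivative is controlled by $1/\abs{G_\nu'(y)}$ in absolute value, uniformly in $\al$.

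Using $\abs{G_\nu'(y)}\ge N\ga y^2/\pi^2$, the boundary term at $y=\del$ is bounded by $\pi/(2N\ga\del)$, the boundary term at $y=\pi/2$ by $1/(2N\ga)$, and the remainder by $(2\pi)^{-1}\int_\del^{\pi/2}\abs{G_\nu'(y)}^{-1}\dy\le\pi/(2N\ga\del)$. Collecting,
\[
\labs{\int_0^{\pi/2}\vphi(y)e(G_\nu(y))\dy}\le\frac{\del^2}{2}+\frac{\pi}{N\ga\del}+\frac{1}{2N\ga}.
\]
Optimising by $\del=(\pi/(N\ga))^{1/3}$ balances the first two terms and produces a bound of the form $(3\pi^{2/3}/2)(N\ga)^{-2/3}+(1/2)(N\ga)^{-1}\le 4(N\ga)^{-2/3}$ after using $N\ga\ge 1$, which is the desired estimate~\eqref{eq:maxorder1'}. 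The main technical obstacle is precisely the $\al$-uniform estimate on $(\sin y/G_\nu'(y))'$, necessary because $\al$ can be arbitrarily close to $0$ whenever $\ga+\varrho$ lies close to an integer; the characteristic exponent $-2/3$ is the van der Corput-type signature dictated by the quadratic vanishing of $\abs{G_\nu'}$ at the origin.
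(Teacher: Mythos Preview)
Your proof is correct and shares the same global architecture as the paper's: split $[0,\pi/2]$ at a point $\del$, bound the contribution of $[0,\del]$ trivially by $\del^2/2$, bound the contribution of $[\del,\pi/2]$ by a quantity of order $(N\ga\del)^{-1}$, and optimise with $\del\sim(N\ga)^{-1/3}$ to obtain the $-2/3$ exponent.

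The difference is in how the tail $[\del,\pi/2]$ is treated. The paper applies Lemma~\ref{lemma:firsttest} to the ratio $F_\nu=G_\nu'/\vphi$; since $F_\nu$ has a stationary point $y_\nu$ with $\cos y_\nu=\ga/(\nu-\varrho)$, this forces a case distinction according to whether $y_\nu\le\del$ or $y_\nu>\del$, with a separate lower bound on $\abs{F_\nu}$ in each case, yielding $\pi/(N(\nu-\varrho)\del)$ and then invoking $\nu-\varrho\ge\ga$. You instead integrate by parts directly, using that $G_\nu'$ itself is of constant sign on $[0,\pi/2]$; the extra work is the explicit computation of $(\sin y/G_\nu')'$ and the $\al$-uniform bound $\abs{(\sin y/G_\nu')'}\le 1/\abs{G_\nu'}$, which you carry out cleanly. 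Your route is a little more self-contained---it sidesteps the monotonicity hypothesis of Lemma~\ref{lemma:firsttest} and the attendant case split---and delivers $N\ga$ in the denominator at once. One small imprecision: $\abs{G_\nu'}$ does not literally vanish at $y=0$ (its value there is $N\al/\pi>0$); what vanishes quadratically is your $\al$-uniform lower bound $N\ga y^2/\pi^2$, and this is what makes the splitting necessary.
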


\begin{proof}
The function $F_\nu$ has a stationary point $y_\nu$ that
satisfies $\cos y_\nu=\ga/(\nu-\varrho)$. In this case,
applying the {\em first derivative test} directly to the integral may yield
a bound of the form \(1/(N\sin y_\nu)\), which is undesirable since \(y_\nu\)
can be very close to zero when \(\nu\) is close to \(\ga+\varrho\).
Instead, for any $\del\in(0,\pi/2)$ to be determined later on, we have
\[
\labs{\int_0^{\del}\vphi(y)e(G_\nu(y))\dy}\le
\int_0^{\del}\vphi(y)\dy\le\int_0^{\del}y\dy=\dfrac{\del^2}2.
\]

If $y_\nu\le\del$,
then we use Lemma~\ref{lemma:firsttest} with $\lam=\abs{F_\nu(\del)}$. This gives
\[
\labs{\int_{\del}^{\pi/2}\vphi(y)e(G_\nu(y))\dy}\le
\dfrac{1}{\pi\abs{F_\nu(\del)}}.
\]

If $y_\nu>\del$, then we proceed along the same line that leads to~\eqref{eq:subinter} to get
\[
\labs{\int_{\del}^{\pi/2}\vphi(y)e(G_\nu(y))\dy}\le\dfrac{2}{\pi\min_{y\in[\del,\pi/2]}
\abs{F_\nu(y)}}=\dfrac{2}{\pi\abs{F_\nu(y_\nu)}}.
\]
A direct calculation gives
\begin{align*}
\abs{F_\nu(\del)}&=\dfrac{N}{\pi}(\nu-\varrho)\dfrac{1-\cos y_{\nu}\cos\del}{\sin\del}\ge\dfrac{N}{\pi}(\nu-\varrho)\dfrac{1-\cos\del}{\sin\del}\\
&=\dfrac{N}{\pi}(\nu-\varrho)\tan\dfrac{\del}2\ge\dfrac{N(\nu-\varrho)}{2\pi}\del,
\end{align*}
and for $y_\nu>\del$,
\begin{equation}\label{eq:lowbd}
\abs{F_\nu(y_\nu)}
=\dfrac{N}{\pi}(\nu-\varrho)\sin y_\nu\ge\dfrac{2N(\nu-\varrho)}{\pi^2}y_\nu
>\dfrac{2N(\nu-\varrho)}{\pi^2}\del.
\end{equation}
Combining the above four inequalities, we obtain,
for any $\del\in(0,\pi/2)$,
\[
\labs{\int_{\del}^{\pi/2}\vphi(y)e(G_\nu(y))\dy}\le\max\Lr{\dfrac1{\pi\abs{F_\nu(\del)}},
\dfrac2{\pi\abs{F_\nu(y_\nu)}}}\le\dfrac{\pi}{N(\nu-\varrho)\del}.
\]
To sum up, we have
\[
\labs{\int_0^{\pi/2}\vphi(y)e(G_\nu(y))\dy}\le\dfrac{\del^2}2
+\dfrac{\pi}{N(\nu-\varrho)\del}.
\]
Taking $\del=\pi^{1/3}(N(\nu-\varrho))^{-1/3}\in(0,\pi/2)$, we get
\begin{equation}\label{eq:extremeest}
\labs{\int_0^{\pi/2}\vphi(y)e(G_\nu(y))\dy}\le\dfrac32\pi^{2/3}[N(\nu-\varrho)]^{-2/3}
\le 4[N(\nu-\varrho)]^{-2/3},
\end{equation}
which yields~\eqref{eq:maxorder1'} by using the fact that
$\nu-\varrho\ge\ga+\varrho-\varrho=\ga$.
\end{proof}

Proceeding along the same line that led to~\eqref {eq:maxOrder2}, we obtain
a parallel result of Lemma~\ref{lema:genest1}. The proof is postponed to the
appendix.
\begin{lemma}\label{lema:altergen}
If $1\le\nu<\integer{\ga+\varrho}$, then
\begin{equation}\label{eq:genest}
\labs{\int_0^{\pi/2}\vphi(y)e(G_{\nu}(y))\dy}\le
\dfrac{3\pi}{\sqrt{N\ga}\sin y_\nu}.
\end{equation}
\end{lemma}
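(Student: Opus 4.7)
The plan is to mirror the proof of Lemma~\ref{lema:genest1}, exploiting the unique stationary point $y_\nu \in (0,\pi/2)$ of $G_\nu$, determined by $\cos y_\nu = (\nu-\varrho)/\ga$. The hypothesis $1\le\nu<\integer{\ga+\varrho}$ guarantees $0<\nu-\varrho<\ga$, so this $y_\nu$ exists, and a direct computation of $F_\nu' = -(N/\pi)(\ga-(\nu-\varrho)\cos y)/\sin^2 y$ shows $F_\nu = G_\nu'/\vphi$ is positive and monotonically decreasing on $(0,y_\nu)$, and negative and monotonically decreasing on $(y_\nu,\pi/2)$. I would then split the integral as
\[
\int_0^{\pi/2} = \int_0^{y_\nu-\eta} + \int_{y_\nu-\eta}^{y_\nu+\eta} + \int_{y_\nu+\eta}^{\pi/2}
\]
for a parameter $\eta\in(0,\min(y_\nu,\pi/2-y_\nu))$ to be chosen at the end, and estimate the three pieces separately.

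For the left piece, the sum-to-product identity yields
\[
F_\nu(y_\nu-\eta) = \frac{2N\ga}{\pi}\cdot\frac{\sin(y_\nu-\eta/2)\sin(\eta/2)}{\sin(y_\nu-\eta)} \ge \frac{2N\ga\eta}{\pi^2},
\]
where I combine $\sin(y_\nu-\eta/2)\ge\sin(y_\nu-\eta)$ (both angles lie in $(0,\pi/2)$, the former larger) with Jordan's inequality~\eqref{eq:jordan} applied to $\sin(\eta/2)$. Lemma~\ref{lemma:firsttest} then bounds this piece by $\pi/(2N\ga\eta)$. Analogously,
\[
\abs{F_\nu(y_\nu+\eta)} = \frac{2N\ga}{\pi}\cdot\frac{\sin(y_\nu+\eta/2)\sin(\eta/2)}{\sin(y_\nu+\eta)} \ge \frac{2N\ga\eta\sin y_\nu}{\pi^2},
\]
using $\sin(y_\nu+\eta/2)\ge\sin y_\nu$ and $\sin(y_\nu+\eta)\le 1$; this gives a bound of $\pi/(2N\ga\eta\sin y_\nu)$ for the right piece. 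The middle piece is controlled trivially via
\[
\int_{y_\nu-\eta}^{y_\nu+\eta}\sin y\,\md y = 2\sin y_\nu\sin\eta \le 2\eta\sin y_\nu.
\]
Taking $\eta=(N\ga)^{-1/2}$ and summing the three contributions (using $\sin y_\nu\le 1$ to absorb the $\sin y_\nu$-free terms into $1/\sin y_\nu$) produces the bound $(\pi+2)/(\sqrt{N\ga}\sin y_\nu)$, which is strictly smaller than $3\pi/(\sqrt{N\ga}\sin y_\nu)$.

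The main obstacle is verifying that $\eta=(N\ga)^{-1/2}$ is admissible, i.e.\ $\eta<\min(y_\nu,\pi/2-y_\nu)$. If $y_\nu\le(N\ga)^{-1/2}$, then $\sqrt{N\ga}\sin y_\nu\le\sqrt{N\ga}\,y_\nu\le 1$, so the claimed bound exceeds $\int_0^{\pi/2}\sin y\,\md y=1$ and holds trivially. If instead $\pi/2-y_\nu<(N\ga)^{-1/2}$, I would drop the right piece and use the two-piece split $[0,y_\nu-\eta]\cup[y_\nu-\eta,\pi/2]$; since $\cos y_\nu=\sin(\pi/2-y_\nu)\le(N\ga)^{-1/2}$ and $\sin y_\nu$ is close to one, the combined tail integral is bounded by $\cos(y_\nu-\eta)\le\cos y_\nu+\eta\sin y_\nu\le 2(N\ga)^{-1/2}$, and the left-piece estimate is unchanged, so the resulting bound $(\pi/2+2)/\sqrt{N\ga}$ remains well within $3\pi/(\sqrt{N\ga}\sin y_\nu)$.
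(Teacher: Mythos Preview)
Your proof is correct and follows the paper's approach: split the integral around the stationary point $y_\nu$, apply Lemma~\ref{lemma:firsttest} on the two outer pieces, bound the middle piece trivially, and optimize $\eta$. The only differences are organizational --- you evaluate the middle piece exactly as $2\eta\sin y_\nu$ (rather than the paper's cruder $2\eta$) and pair this with the uniform bound $\sin(y_\nu+\eta)\le 1$ on the right piece, which lets you use a single choice $\eta=(N\ga)^{-1/2}$ together with two short feasibility checks, whereas the paper splits into the cases $y_\nu\le\pi/4$ and $y_\nu>\pi/4$ with case-specific choices $\eta=(N\ga)^{-1/2}\sin y_\nu$ and $\eta=\tfrac12(N\ga)^{-1/2}$.
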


The estimate for the
endpoint case $\nu=\integer{\ga+\varrho}$
is essentially the same with the argument that led to~\eqref{eq:genest}. However,
the root $y_\nu$ varies with the magnitude of the fractional part
of $\ga+\varrho$. Therefore, a more careful treatment is required to obtain a bound
that is independent of the magnitude of $\fraction{\ga+\varrho}$.  The proof is also
postponed to the appendix.
\begin{lemma}\label{lemma:maxorder1}
If $\nu=\integer{\ga+\varrho}$, then
\begin{equation}\label{eq:maxorder1}
\labs{\int_0^{\pi/2}\vphi(y)e(G_{\nu}(y))\dy}
\le 4\pi(N\ga)^{-1/2}.
\end{equation}
\end{lemma}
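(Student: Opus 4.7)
The plan is to adapt the split-and-derivative-test approach from Lemma~\ref{lemma:maxorder}, with modifications that accommodate the fact that here $\nu - \varrho \le \gamma$, so the phase $G_\nu$ has a genuine stationary point $y_\nu \in [0,\pi/2]$ satisfying $\cos y_\nu = (\nu-\varrho)/\gamma$. Since $\nu = \integer{\gamma+\varrho}$ gives $\gamma - 1 < \nu-\varrho \le \gamma$, I obtain $y_\nu \in [0, \pi/\sqrt{2\gamma}]$ (from $1-\cos y\ge 2y^2/\pi^2$ via Jordan), and the distance from $y_\nu$ to $0$ shrinks as $\fraction{\gamma+\varrho}\to 0$. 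The three-way split used in Lemma~\ref{lema:altergen} breaks down in this limit: the required buffer $\eta < y_\nu$ shrinks, and the bound $3\pi/(\sqrt{N\gamma}\sin y_\nu)$ fails to be uniform in $\fraction{\gamma+\varrho}$. My remedy is to introduce a free parameter $\eta$, to be chosen of size $(N\gamma)^{-1/2}$, and to dichotomize on whether $y_\nu \le \eta$ or $y_\nu > \eta$.

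If $y_\nu \le \eta$, I bound $\int_0^{y_\nu + \eta}\vphi(y) e(G_\nu(y))\dy$ trivially using $\vphi(y)\le y$, giving $1 - \cos(y_\nu + \eta) \le (y_\nu+\eta)^2/2 \le 2\eta^2$. On the complement $[y_\nu + \eta, \pi/2]$ a direct computation shows $F_\nu' \le 0$ on $(0,\pi/2)$ since $\nu - \varrho \le \gamma$; past $y_\nu$ the function $F_\nu$ is negative, so $|F_\nu|$ is monotone increasing and attains its minimum at $y = y_\nu + \eta$. The identity $\cos y_\nu - \cos(y_\nu + \eta) = 2\sin(y_\nu + \eta/2)\sin(\eta/2)$ combined with Jordan's inequality~\eqref{eq:jordan} will produce a lower bound $|F_\nu(y_\nu + \eta)| \ge c\,N\gamma\eta$ for some absolute $c>0$, so Lemma~\ref{lemma:firsttest} yields a contribution of order $1/(N\gamma\eta)$. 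If $y_\nu > \eta$, I revert to the three-way split $[0, y_\nu - \eta] \cup [y_\nu - \eta, y_\nu + \eta] \cup [y_\nu + \eta, \pi/2]$ exactly as in Lemma~\ref{lema:altergen}: the outer integrals are bounded using the first derivative test with $\lambda = |F_\nu(y_\nu \mp \eta)|$, while the middle piece equals $2\sin y_\nu\sin\eta$, and here the a priori bound $y_\nu \le \pi/\sqrt{2\gamma}$ yields middle $\le 2\pi\eta/\sqrt{2\gamma}$.

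In both regimes the sum of pieces has the form $O(\eta^2 + \eta/\sqrt\gamma + 1/(N\gamma\eta))$, which for $\eta = (N\gamma)^{-1/2}$ and $\gamma\ge 1$ is of order $(N\gamma)^{-1/2}$; a careful accounting of constants (perhaps with a slightly different numerical choice of $\eta$) should reach the target $4\pi(N\gamma)^{-1/2}$. The main obstacle is the uniform lower bound $|F_\nu(y_\nu + \eta)| \ge c\,N\gamma \eta$ in the first regime: as $y_\nu \to 0^+$ the ratio $\sin(y_\nu + \eta/2)/\sin(y_\nu + \eta)$ degenerates and must be controlled by a positive constant independent of $y_\nu$. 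Its limiting value at $y_\nu = 0$ is $1/(2\cos(\eta/2))\in[1/2,1]$, which is reassuring, but one must verify the bound throughout the admissible range $y_\nu\in[0,\eta]$ with $y_\nu+\eta\le \pi/2$, not just at the endpoints; edge cases with very small $\gamma$ (where $y_\nu+\eta$ might exceed $\pi/2$) are handled separately by the trivial bound $\pi/2 \le 4\pi(N\gamma)^{-1/2}$ when $N\gamma\le 64$.
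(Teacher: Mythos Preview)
Your approach is correct and shares the paper's overall two-regime strategy, but the handling of the small-$y_\nu$ regime is genuinely different. In the regime $y_\nu>\eta:=(N\ga)^{-1/2}$ both you and the paper invoke the three-way split of Lemma~\ref{lema:altergen} with this same $\eta$ (the paper simply bounds the middle piece by $2\eta$, reaching $(3\pi+2)(N\ga)^{-1/2}$). In the complementary regime, however, the paper does \emph{not} split at $y_\nu+\eta$: it introduces a coarser cutoff $\del=2(N\ga)^{-1/4}$, uses the two-sided estimate $\tfrac{\pi}{2}\fraction{\ga+\varrho}/\ga\le y_\nu\le\sqrt{\pi\fraction{\ga+\varrho}/\ga}$ to place $y_\nu<\del$, and exploits the smallness of $\fraction{\ga+\varrho}/\ga$ to force $\cos y_\nu-\cos\del\ge\tfrac12(1-\cos\del)$, whence $|F_\nu(\del)|\ge N\ga\del/(4\pi)$; the two pieces then balance as $\del^2/2+4/(N\ga\del)=2(N\ga)^{-1/2}+2(N\ga)^{-3/4}$. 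Your choice to split at the moving point $y_\nu+\eta$ is tidier---a single scale throughout, and a sharper $O((N\ga)^{-1})$ trivial piece---at the price of the ratio $\sin(y_\nu+\eta/2)/\sin(y_\nu+\eta)$ you worry about; that ratio is in fact $\ge 1/2$ uniformly for $y_\nu\ge 0$ and $y_\nu+\eta\le\pi/2$, since $\sin(y_\nu+\eta)=2\sin\tfrac{y_\nu+\eta}{2}\cos\tfrac{y_\nu+\eta}{2}\le 2\sin(y_\nu+\tfrac{\eta}{2})$.

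One correction: the edge case $y_\nu+\eta>\pi/2$ is \emph{not} confined to $N\ga\le 64$. It requires $\cos y_\nu=1-\fraction{\ga+\varrho}/\ga<\sin\eta$, which forces $\ga$ close to $1$ but permits $N$ arbitrarily large. No separate fix is needed, though: when $y_\nu+\eta>\pi/2$ the third piece of your split is empty and the remaining middle piece is $\int_{y_\nu-\eta}^{\pi/2}\sin y\dy=\cos(y_\nu-\eta)<\sin(2\eta)\le 2\eta$ (using $y_\nu-\eta>\pi/2-2\eta$), so your Case~2 estimate survives unchanged.
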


Combining these lemmas, we have the following estimate.
\begin{theorem}\label{thm:main2}
If $t=\mc{O}(1)$, and  \(n\ne N/2,N/2-1\), then
\begin{equation}\label{eq:pointgen}
\abs{D y(n,t)}\le C\dfrac{\abs{\ka_2}}{\ka_1+4\ka_2}
\Biggl(\sqrt{\dfrac{\eps}\ga}(1+\ga)+\eps(1+\eps\ga
+\log(\ga+2))\Biggr),
\end{equation}
where $C$ is independent of $N,t$ and $\ga$.
\end{theorem}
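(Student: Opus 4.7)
The plan is to mimic the proof of Theorem~\ref{thm:main1}: starting from the representation \eqref{eq:genexp}, which expresses $Dy(n,t)$ as a combination of two exponential sums of the form $\sum_{k=0}^N\phi(k)e(f(k))$, one applies truncated Poisson summation as in Lemma~\ref{lema:trans} to reduce each such sum to a short sum of oscillatory integrals plus a controlled remainder; this reduction is already recorded in \eqref{eq:gexpsum}. By the antisymmetry $Dy(n,t)=-Dy(N-n-1,t)$ observed after \eqref{eq:derexp}, it suffices to consider $n>L$, so that $\varrho:=(n+1/2-L)/N>0$; a parallel argument dispatches the second of the two exponential sums in \eqref{eq:genexp}.

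The remainder term in \eqref{eq:gexpsum} already contributes $C\eps(1+\eps\ga+\log(\ga+2))$, matching the second group of terms in \eqref{eq:pointgen}. It remains to bound
\[
T:=\sum_{\nu=0}^{\integer{\ga+\varrho}+1}\labs{\int_0^{\pi/2}\vphi(y)e(G_\nu(y))\,\md y}.
\]
I split $T$ into four pieces according to $\nu$: the boundary $\nu=0$ via \eqref{eq:genzeroOrder} gives $\mc{O}((N\ga)^{-1/2})$; the past-endpoint $\nu=\integer{\ga+\varrho}+1$ via Lemma~\ref{lemma:maxorder} gives $\mc{O}((N\ga)^{-2/3})$; the endpoint $\nu=\integer{\ga+\varrho}$ via Lemma~\ref{lemma:maxorder1} gives $\mc{O}((N\ga)^{-1/2})$; and the interior block $1\le\nu\le\integer{\ga+\varrho}-1$, via Lemma~\ref{lema:altergen}, contributes $3\pi(N\ga)^{-1/2}\Sigma$, where $\Sigma:=\sum_{\nu=1}^{\integer{\ga+\varrho}-1}1/\sin y_\nu$ with $\cos y_\nu=(\nu-\varrho)/\ga$.

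The main obstacle is controlling $\Sigma$. Since $\sin y_\nu=\sqrt{\ga^2-(\nu-\varrho)^2}/\ga$, the summand $\ga/\sqrt{\ga^2-(\nu-\varrho)^2}$ is positive and monotonically increasing on the relevant range $0<\nu-\varrho<\ga$, so a Riemann-sum comparison yields
\[
\Sigma\le\int_0^{\ga}\frac{\ga\,\md u}{\sqrt{\ga^2-u^2}}=\frac{\pi\ga}{2}.
\]
The exclusion of $\nu=\integer{\ga+\varrho}$ from $\Sigma$ is essential: that is precisely where $\sin y_\nu$ would degenerate, and its separate treatment via Lemma~\ref{lemma:maxorder1} supplies the requisite $\fraction{\ga+\varrho}$-uniform bound $4\pi(N\ga)^{-1/2}$. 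The interior block therefore contributes $\mc{O}(\sqrt{\ga/N})=\mc{O}(\sqrt{\eps\ga})$, which is majorised by $\mc{O}(\sqrt{\eps/\ga}(1+\ga))$.

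Summing the four pieces yields $T\le C\sqrt{\eps/\ga}(1+\ga)$. Substituting into \eqref{eq:gexpsum}, multiplying by the prefactor $\ka_2/[(\ka_1+4\ka_2)N]$ inherited from \eqref{eq:genexp}, and repeating the same chain of estimates verbatim for the second exponential sum, one arrives at \eqref{eq:pointgen}. Uniformity of the constant in $N$, $t$, and $\ga$ is inherited from the uniformity in each component lemma and from the purely geometric nature of the Riemann-sum step.
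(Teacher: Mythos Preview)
Your proposal is correct and follows essentially the same route as the paper: reduce via \eqref{eq:gexpsum}, split the short sum into $\nu=0$, the interior block $1\le\nu\le\integer{\ga+\varrho}-1$, and the two endpoint cases, invoke \eqref{eq:genzeroOrder} and Lemmas~\ref{lemma:maxorder}, \ref{lema:altergen}, \ref{lemma:maxorder1} respectively, and control $\Sigma=\sum 1/\sin y_\nu$ by an integral comparison yielding $\mc{O}(\ga)$; the second exponential sum in \eqref{eq:genexp} is then handled in the same way with $\varrho$ replaced by $-\varrho$. One cosmetic remark: the factor $1/N$ you mention is already absorbed into the left-hand side of \eqref{eq:gexpsum}, so the prefactor carried from \eqref{eq:genexp} to the final bound is $\abs{\ka_2}/(\ka_1+4\ka_2)$, not $\ka_2/[(\ka_1+4\ka_2)N]$.
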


\begin{proof}
Summing up the above three lemmas, we obtain
\begin{align*}
\labs{\sum_{\nu=0}^{\integer{\ga+\varrho}+1}\int_0^{\pi/2}\varphi(y)
e(G_\nu(y))\dy}&\le 2(N\ga)^{-1/2}+\sum_{\nu=1}^{\integer{\ga+\varrho}-1}
\labs{\int_0^{\pi/2}\varphi(y)
e(G_\nu(y))\dy}\\
&\quad+4\pi(N\ga)^{-1/2}+4(N\ga)^{-2/3}\\
&\le3\pi(N\ga)^{-1/2}
\sum_{\nu=1}^{\integer{\ga+\varrho}-1}
\dfrac{1}{\sin y_\nu}\\
&\quad+5\pi(N\ga)^{-1/2}+4(N\ga)^{-2/3}.
\end{align*}
A direct calculation gives
\begin{align*}
\sum_{\nu=1}^{\integer{\ga+\varrho}-1}
\dfrac{1}{\sin y_\nu}&=\sum_{\nu=1}^{\integer{\ga+\varrho}-1}
\dfrac{1}{\sqrt{1-(\nu-\varrho)^2/\ga^2}}\\
&\le\sum_{\nu=1}^{\integer{\ga+\varrho}-1}\int_{\nu}^{\nu+1}
\dfrac{1}{\sqrt{1-(x-\varrho)^2/\ga^2}}\dx\\
&=\int_1^{\integer{\ga+\varrho}}\dfrac{1}{\sqrt{1-(x-\varrho)^2/\ga^2}}\dx\\
&\le\int_{\varrho}^{\ga+\varrho}\dfrac{1}{\sqrt{1-(x-\varrho)^2/\ga^2}}\dx\\
&=\ga.
\end{align*}
Combining the above two inequalities, we obtain
\begin{equation}\label{eq:fsum1}
\labs{\sum_{\nu=0}^{\integer{\ga+\varrho}+1}\int_0^{\pi/2}\varphi(y)
e(G_\nu(y))\dy}\le
5\pi(N\ga)^{-1/2}(1+\ga)+4(N\ga)^{-2/3}.
\end{equation}

It remains to estimate the integral \(\int_0^{\pi/2}\vphi(y)e(G_{\nu}(y))\dy\)
with $\varphi(y)=\sin y$ and
\(G_{\nu}(y)=(N/\pi)(\ga\sin y-\varrho y-\nu y)\).
We choose $\Delta=1/2$. The remainder
is still bounded by $\mc{O}\Lr{\eps(1+\eps\ga+\log(\ga+2))}$.
Obviously, there holds
$-\varrho-\Delta>-1$ since $\abs{\varrho}\le 1/2$. It remains to
estimate the shorter sum with $\nu=0,\cdots,\integer{\ga-\varrho+1/2}$.
We deal with the cases when $\nu=\integer{\ga-\varrho}+1,
\nu=\integer{\ga-\varrho}$ and $\nu=0,\cdots,\integer{\ga-\varrho}-1$
exactly the same with those of Lemmas~\ref{lemma:maxorder}, ~\ref{lemma:maxorder1} and~\ref{lema:altergen}, respectively.
This results in
\[
\labs{\sum_{\nu=0}^{\integer{\ga-\varrho}+1}\int_0^{\pi/2}
\vphi(y)e(G_\nu(y))\dy}\le 5\pi(N\ga)^{-1/2}(1+\ga)+4(N\ga)^{-2/3},
\]
which together with~\eqref{eq:fsum1} gives the final estimate~\eqref{eq:pointgen}
\end{proof}

\medskip
\section{Estimate of the solution over short time}
In this section, we estimate the solution over a shorter time interval,
i.e., $t=\mc{O}(\eps)$. This is motivated by the previous observation that the error already develops to
finite magnitude within this short time scale.
\begin{lemma}
If $t=\mc{O}(\eps)$ and $n\not=N/2,N/2-1$, then
\begin{equation}\label{eq:midestshort}
\abs{Dy(n,t)}\le C\dfrac{\abs{\ka_2}}{\ka_1+4\ka_2}\Lr{
\abs{n+1/2-N/2}^{-2/3}+\eps^{2/3}}.
\end{equation}
where $C$ is independent of $N$ and $t$.
\end{lemma}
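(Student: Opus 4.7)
The plan is to adapt the machinery of Theorem~\ref{thm:main2} to the short-time regime. Setting $\ga=t\sqrt{\ka_1+4\ka_2}$, the hypothesis $t=\mc{O}(\eps)$ gives $\ga=\mc{O}(\eps)$ and in particular $\ga/(N\eps)=\mc{O}(\eps)$. Starting from the representation \eqref{eq:genexp}, I would write $Dy(n,t)$ as the difference of two exponential sums of the form $(1/N)\sum_{k=0}^{N}\phi(k)e(f(k))$ with $\phi(x)=\sin(\pi x/N)$ and $f(x)=(\ga/(\pi\eps))\sin(\pi x/N)\pm\varrho x$, and apply Theorem~\ref{thm:TruncPoisson} to each sum exactly as in the derivation of \eqref{eq:gexpsum}. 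For $\ga=\mc{O}(\eps)$ the Poisson remainder is $C\eps(1+\eps\ga+\log(\ga+2))=\mc{O}(\eps)$, already within the target $\mc{O}(\eps^{2/3})$.

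The decisive simplification is that the Poisson transform range $[\alpha-\Delta,\beta+\Delta]$ now has length $\ga/(N\eps)+2\Delta=\mc{O}(1)$, so only a bounded number of integers $\nu$ contribute---generically just $\nu=0$, plus $\nu=\pm 1$ when $|\varrho|$ approaches $1/2$. For the dominant term $\nu=0$ of the first sum, $G_0'(y)=(N/\pi)(\ga\cos y+\varrho)$ keeps a constant sign, $F_0=G_0'/\sin y$ is monotone, and a direct calculation gives $|F_0(y)|\ge N|\varrho|/\pi$ on $(0,\pi/2)$; Lemma~\ref{lemma:firsttest} then yields $|\int\sin y\cdot e(G_0)|\le C/(N|\varrho|)=C/|m|$ where $m=n+1/2-L$. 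For the second sum, with $G_0'(y)=(N/\pi)(\ga\cos y-\varrho)$, the same $C/|m|$ bound follows under $\ga\le|\varrho|/2$ by splitting the range at the critical point $\arccos(\ga/|\varrho|)$ of $|F_0|$ and applying the first derivative test on each piece (using $\min|F_0|=(N/\pi)\sqrt{\varrho^2-\ga^2}$). For the endpoint terms $\nu=\pm 1$, $|\nu-\varrho|\ge 1/2$, and an analogous monotonicity argument delivers an $\mc{O}(1/N)=\mc{O}(\eps)$ contribution.

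The delicate regime is $\ga\gtrsim|\varrho|$, which corresponds to atoms with $|m|$ bounded by a fixed constant (depending on the implicit constant in $t=\mc{O}(\eps)$). There $\sqrt{\varrho^2-\ga^2}$ collapses, and even the $\delta$-trick of Lemma~\ref{lemma:maxorder} yields only $\mc{O}(1)$ for the individual integrals. Fortunately this is precisely the range where $|m|^{-2/3}$ is itself of order one, so the \emph{a priori} estimate $|Dy(n,t)|\le 4|\ka_2|/(\ka_1+4\ka_2)$ established after~\eqref{eq:point-solu} suffices, provided $C$ in \eqref{eq:midestshort} is chosen large enough.

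Combining the two regimes produces $|Dy(n,t)|\le C|\ka_2|/(\ka_1+4\ka_2)(1/|m|+\eps^{2/3}+\eps)$; absorbing $\eps$ into $\eps^{2/3}$ and using $1/|m|\le|m|^{-2/3}$ for $|m|\ge 1$ then yields \eqref{eq:midestshort}. The main obstacle is bridging the two regimes: in the far-field range $\ga\ll|\varrho|$ the first derivative test is clean and delivers a refined $C/|m|$ bound, but in the near-interface range $\ga\gtrsim|\varrho|$ the oscillatory integrals are genuinely of order one, and rather than pursuing a uniform estimate, invoking the trivial bound in this second regime is what keeps the argument tractable.
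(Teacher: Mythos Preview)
Your argument is correct and shares the same scaffold as the paper's proof: truncated Poisson summation reduces each exponential sum to at most two oscillatory integrals (the paper also records exactly $\nu=0,1$ for the first sum and only $\nu=0$ for the second), and the first derivative test controls the $\nu=0$ term of the first sum by $1/(n+1/2-L)$. The genuine difference is how the second-sum $\nu=0$ integral and the first-sum $\nu=1$ integral are handled. The paper applies the $\delta$-splitting device of Lemma~\ref{lemma:maxorder} to both: for the second sum it bounds $\bigl|\int_0^{\pi/2}\sin y\,e(G_0(y))\,dy\bigr|\le \del^2/2+\pi/((n+1/2-L)\del)$ and optimises at $\del=\pi^{1/3}(n+1/2-L)^{-1/3}$ to produce the uniform $4|m|^{-2/3}$ bound, and for $\nu=1$ the same trick gives $8N^{-2/3}=8\eps^{2/3}$; this is where both exponents $-2/3$ in~\eqref{eq:midestshort} originate. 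Your route instead obtains the sharper $C/|m|$ from the first derivative test whenever $\ga\le\varrho/2$, invokes the a~priori bound for the finitely many near-interface atoms with $\ga>\varrho/2$, and then relaxes $1/|m|$ to $|m|^{-2/3}$ only at the very end. Both work; the paper's version is more uniform (no regime split) while yours is more elementary and treats the near-interface case transparently. One small inconsistency: since every contribution in your scheme is $\mc{O}(|m|^{-1})$, $\mc{O}(\eps)$, or absorbed by the trivial bound, your argument actually delivers the stronger $C(|m|^{-2/3}+\eps)$; the $\eps^{2/3}$ term you list in the penultimate display is never produced by your estimates and can simply be dropped.
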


The proof of this lemma is essentially the same with that of
Theorem~\ref{thm:main2}.
\begin{proof}
As to $f(x)=\ga/(\pi\eps)\sin(\pi x/N)+
\varrho x$,  we have $\al=\varrho$ and $\beta=\ga+\varrho$.
We choose $\Del=1/2$, and the remainder term is bounded by $\mc{O}(\eps+\log(\ga+2)\eps)=\mc{O}(\eps)$.
There are only two terms in the shorter sum~\eqref{eq:tpoisson}, i.e.,
$\nu=0,1$ since $t=\mc{O}(\eps)$. When $\nu=0$, using Lemma~\ref{lemma:firsttest} with $\lam=(n+1/2-L)/\pi$, we obtain
\[
\labs{\int_0^{\pi/2}\vphi(y)e(G_0(y))\dy}\le\dfrac{1}{n+1/2-L}.
\]

Using~\eqref{eq:extremeest} with $\nu=1$,
we obtain
\[
\labs{\int_0^{\pi/2}\vphi(y)e(G_1(y))\dy}
\le 4[N-(n+1/2-L)]^{-2/3}
\le 8N^{-2/3}.
\]

As to $f(x)=\ga/(\pi\eps)\sin(\pi x/N)-
\varrho x$, we still take $\Del=1/2$, and the remainder is still bounded by $\mc{O}(\eps)$. There is only one term in the shorter sum, i.e., $\nu=0$. Proceeding along the same line
that leads to~\eqref{eq:extremeest}, we get, for any $\del\in (0,\pi/2)$,
\[
\labs{\int_0^{\pi/2}\vphi(y)e(G_0(y))\dy}\le\dfrac{\del^2}2
+\dfrac{\pi}{(n+1/2-L)\del},
\]
which is minimized by taking $\del=\pi^{1/3}(n+1/2-L)^{-1/3}\in (0,\pi/2)$. This gives
\[
\labs{\int_0^{\pi/2}\vphi(y)e(G_0(y))\dy}\le 4(n+1/2-L)^{-2/3}.
\]
Summing up the above estimates, we get~\eqref{eq:midestshort}.
\end{proof}

We use Euler-MacLaurin formula instead of the truncated Poisson summation formula
to bound $D y(N/2,t)$, because this approach gives a more explicit bound
for the remainder. The starting point is the
following first-derivative form of Euler-MacLaurin formula. For
any real valued function $f(x)$  in $[a,b]$ with continuous
first derivative, we have
\begin{align}\label{eq:euler-mac}
\int_a^b f(x)\dx&=\dfrac{b-a}{2N}\Lr{f(a)+f(b)}+\dfrac{b-a}N\sum_{k=1}^{N-1}
f\Lr{a+k\dfrac{b-a}N}\nn\\
&\quad-\dfrac{b-a}N\int_a^b\Lr{\dfrac{(x-a)N}{b-a}
-\left\lfloor\dfrac{(x-a)N}{b-a}\right\rfloor-\dfrac12}f'(x)\dx.
\end{align}

Starting with ~\eqref{eq:midexp}, and applying Euler-MacLaurin
formula~\eqref {eq:euler-mac} to
\[
f(x)=\sin^2x\cos[(2\ga/\eps)\sin x]
\]
with $a=0$ and $b=\pi$, we obtain
\[
\dfrac{1}N
\sum_{k=0}^N\sin^2\dfrac{k\pi}N\cos(\om_k t)=\negint_0^\pi f(x)\dx
+\dfrac{1}N\int_0^{\pi}\Lr{\dfrac{Nx}{\pi}
-\left\lfloor\dfrac{Nx}{\pi}\right\rfloor-\dfrac12}f'(x)\dx.
\]
The remainder can be directly bounded as
\[
\labs{\dfrac{1}N\int_0^{\pi}\Lr{\dfrac{Nx}{\pi}
-\left\lfloor\dfrac{Nx}{\pi}\right\rfloor-\dfrac12}f'(x)\dx}
\le\dfrac{\pi}N\Lr{2+2\ga/\eps}.
\]
The integral $\negint_0^\pi f(x)\dx$ can be calculated as follows.
\begin{align*}
\negint_0^\pi f(x)\dx&=\negint_0^{\pi/2}f(x)\dx=\negint_0^{\pi/2}\sin^2x\sum_{m=0}^\infty(-1)^m
\Lr{\dfrac{2\ga}\eps}^{2m}\dfrac{1}{(2m)!}\sin^{2m}x\dx\\
&=\sum_{m=0}^\infty(-1)^m
\Lr{\dfrac{2\ga}\eps}^{2m}\dfrac{1}{(2m)!}
\negint_0^{\pi/2}\sin^{2m+2}x\dx\\
&=\sum_{m=0}^\infty(-1)^m
\Lr{\dfrac{2\ga}\eps}^{2m}\dfrac{1}{(2m)!}
\dfrac{(2m+1)!!}{(2m+2)!!}\\
&=\sum_{m=0}^\infty(-1)^m
\Lr{\dfrac{\ga}\eps}^{2m}\dfrac{1}{(m!)^2}
\dfrac{2m+1}{2m+2}.
\end{align*}

This implies the following estimate for $D y(N/2,t)$ when $t=\mc{O}(\eps)$.
\begin{lemma}
If $t=\mc{O}(\eps)$ and $n=N/2$ or $n=N/2-1$, then
\begin{align}\label{eq:midestshort'}
&\labs{Dy(n,t)+\dfrac{\ka_2}{\ka_1+4\ka_2}-\dfrac{2\ka_2}{\ka_1+4\ka_2}
\sum_{m=0}^\infty(-1)^m
\Lr{\dfrac{\ga}\eps}^{2m}\dfrac{1}{(m!)^2}
\dfrac{2m+1}{2m+2}}\nn\\
&\hspace{1in}\le 2\pi(\eps+\ga).
\end{align}
\end{lemma}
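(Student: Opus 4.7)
My plan is to assemble the lemma from the explicit representation \eqref{eq:midexp} for $Dy(L,t)$ together with the Euler--MacLaurin calculation already carried out in the paragraphs just above the statement. The only substantive work is to properly identify each piece and to verify that the remainder bound does not pick up large factors of $\ga/\eps$.

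First, I would rewrite \eqref{eq:midexp} in the form
\[
Dy(L,t)+\dfrac{\ka_2}{\ka_1+4\ka_2}=\dfrac{2\ka_2}{\ka_1+4\ka_2}\cdot\dfrac{1}{N}\sum_{k=0}^N\sin^2\dfrac{k\pi}N\cos(\om_k t),
\]
so that the quantity inside the absolute value of \eqref{eq:midestshort'} is precisely $2\ka_2/(\ka_1+4\ka_2)$ times the difference between this discrete sum and the target power series. Using $\ga=t\sqrt{\ka_1+4\ka_2}$ and $\om_k=(2/\eps)\sqrt{\ka_1+4\ka_2}\sin(k\pi/N)$, the discrete sum is exactly $(1/N)\sum_{k=0}^N f(k\pi/N)$ with $f(x)=\sin^2 x\cos\Lr{(2\ga/\eps)\sin x}$.

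Second, I would apply the first-derivative Euler--MacLaurin formula \eqref{eq:euler-mac} on $[0,\pi]$. Since $f(0)=f(\pi)=0$, the trapezoidal boundary weights vanish and the identity reads $(1/N)\sum_{k=0}^N f(k\pi/N)=\negint_0^\pi f\dx+R$, where $R$ is the integral against the sawtooth function. Estimating $|f'(x)|\le 2+2\ga/\eps$ gives $|R|\le(\pi/N)(2+2\ga/\eps)=2\pi(\eps+\ga)$. Since $|2\ka_2/(\ka_1+4\ka_2)|\le 1$ for physically reasonable force constants (as verified in the LJ and Morse examples of Section~\ref{sec2}), this produces the right-hand side of the lemma. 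To evaluate $\negint_0^\pi f\dx$ in closed form, I would exploit symmetry of $f$ about $\pi/2$ to reduce to $(2/\pi)\int_0^{\pi/2}$, expand the cosine in its Taylor series and integrate term-by-term against the Wallis integral $\int_0^{\pi/2}\sin^{2m+2}x\dx=(\pi/2)(2m+1)!!/(2m+2)!!$, then rewrite the double factorials using $(2m+1)!!=(2m+1)!/(2^m m!)$ and $(2m+2)!!=2^{m+1}(m+1)!$ to obtain exactly the series displayed in \eqref{eq:midestshort'}. The case $n=L-1$ is handled analogously by applying the same derivation to \eqref{eq:derexp} with $n+1/2-L=-1/2$.

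The main obstacle is the bound on $|f'|$: one must notice that the large parameter $\ga/\eps$ enters $f'$ only \emph{linearly}, because differentiating $\cos\Lr{(2\ga/\eps)\sin x}$ brings down one factor of $2\ga/\eps$ accompanied by a bounded factor $\cos x$, while the $\sin^2 x$ prefactor contributes only an $\mc{O}(1)$ derivative. This linear-in-$\ga/\eps$ dependence, multiplied by the mesh size $1/N=\eps$, is precisely what yields the $\mc{O}(\eps+\ga)$ scaling appropriate for the short-time regime $t=\mc{O}(\eps)$; any quadratic or higher growth in $\ga/\eps$ would force a blow-up. The remaining series manipulations and the Wallis integral are routine.
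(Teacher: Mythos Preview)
Your proposal is correct and follows essentially the same route as the paper: start from \eqref{eq:midexp}, apply the first-derivative Euler--MacLaurin formula \eqref{eq:euler-mac} to $f(x)=\sin^2x\cos[(2\ga/\eps)\sin x]$ on $[0,\pi]$, bound the sawtooth remainder via $|f'|\le 2+2\ga/\eps$, and evaluate $\negint_0^\pi f$ by Taylor expansion and the Wallis integrals. Your explicit observation that $|2\ka_2/(\ka_1+4\ka_2)|\le 1$ is needed to absorb the prefactor into the stated right-hand side is a detail the paper leaves implicit.
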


The above estimate means that $Dy(N/2,t)$ is in an $\mc{O}(\eps)-$neighborhood of a constant value that depends on the ratio $t/\eps$.
\begin{remark}
We cannot directly use the above
approach to estimate $D y(n,t)$ because an undesirable term $\mc{O}(n/N)$ may
appear in the bound.
\end{remark}
\section{Discussion}
Based on a simple one-dimensional lattice model, we studied the effect
of ghost forces for dynamic problems. Ghost forces may arise for dynamic coupling
methods that were derived from energy approximation and Hamilton's principle \cite{AbBrBe98, Shenoy1999,Rodney03, XiBe04}. In our study, based on
an approximate model, we show that the error develops rather quickly. On the \(\mc{O}(1)\) time scale, the
error is observed in the entire domain, and at the interface, the gradient of the error is
 \(\mc{O}(1)\). Therefore, the influence of the error is more significant than that
 of the static case. It also suggests that the issue of the ghost force may even be more severe than the artificial reflections at the interface. 
 The analysis for the original dynamic QC model requires a different method, and it will be pursued in our future works.
\begin{appendix}
\section{Proof of Lemma~\ref{lema:altergen}}
The proof of this lemma is essentially the same with that of Lemma~\ref{lema:genest1}.

\noindent{\em Proof\;}
For $1\le\nu<\ga$, there exists $y_{\nu}\in(0,\pi/2)$ such that
$F_{\nu}(y_{\nu})=0$ with $\cos y_{\nu}=(\nu-\varrho)/\ga$. For any
$\eta\in \Lr{0,\min(y_\nu,\pi/2-y_\nu)}$
that will be chosen later, we write
\[
\int_0^{\pi/2}\vphi(y)e\Lr{G_{\nu}(y)}\dy=\Lr{\int_0^{y_{\nu}-\eta}+
\int_{y_{\nu}+\eta}^{\pi/2}+\int_{y_{\nu}-\eta}^{y_\nu+\eta}}
\vphi(y)e\Lr{G_{\nu}(y)}\dy.
\]
We deal with the three integrals separately. Using Lemma~\ref{lemma:firsttest}
with $\lam=\abs{F_{\nu}(y_{\nu}-\eta)}$,  we obtain
\[
\labs{\int_0^{y_{\nu}-\eta}\vphi(y)e(G_\nu(y))\dy}
\le\dfrac{1}{\pi\abs{F_{\nu}(y_{\nu}-\eta)}},
\]
and
\begin{align*}
\abs{F_{\nu}(y_{\nu}-\eta)}&=F_{\nu}(y_{\nu}-\eta)
=\dfrac{N\ga}{\pi}\dfrac{\cos(y_{\nu}-\eta)-\cos y_{\nu}}{\sin (y_{\nu}-\eta)}\\
&=\dfrac{2N\ga}{\pi}\dfrac{\sin(y_{\nu}-\eta/2)\sin(\eta/2)}{\sin (y_{\nu}-\eta)}\\
&\ge\dfrac{2N\ga}{\pi}\sin(\eta/2)\\
&\ge\dfrac{2N\ga\eta}{\pi^2},
\end{align*}
where we have used Jordan's inequality~\eqref{eq:jordan} in the last step.
This gives
\[
\labs{\int_0^{y_{\nu}-\eta}
\vphi(y)e\Lr{G_{\nu}(y)}\dy}\le\dfrac{\pi}{2N\ga\eta}.
\]

Using Lemma~\ref{lemma:firsttest} again
with $\lam=\abs{F_{\nu}(y_{\nu}+\eta)}$, we have
\[
\labs{\int_{y_{\nu}+\eta}^{\pi/2}\vphi(y)e(G_\nu(y))\dy}
\le\dfrac{1}{\pi\abs{F_{\nu}(y_{\nu}+\eta)}}.
\]
Furthermore,
\begin{align*}
\abs{F_{\nu}(y_{\nu}+\eta)}&=-F_{\nu}(y_{\nu}+\eta)
=\dfrac{N\ga}{\pi}\dfrac{\cos y_{\nu}-\cos(y_{\nu}+\eta)}{\sin(y_{\nu}+\eta)}\\
&=\dfrac{2N\ga}{\pi}\dfrac{\sin(y_{\nu}+\eta/2)\sin(\eta/2)}{\sin(y_{\nu}+\eta)}\\
&\ge\dfrac{2N\ga\eta}{\pi^2}\dfrac{\sin(y_{\nu}+\eta/2)}{\sin(y_{\nu}+\eta)},
\end{align*}
which yields
\[
\labs{\int_{y_{\nu}+\eta}^{\pi/2}
\vphi(y)e\Lr{G_{\nu}(y)}\dy}
\le\dfrac{\pi\sin(y_{\nu}+\eta)}{2N\ga\eta\sin(y_{\nu}+\eta/2)}.
\]
If \(y_\nu \in (0, \pi/4]\), we would require that \(\eta \in (0, y_\nu)\).
We will have, \(\sin (y_\nu + \eta)\le \sin 2y_\nu\le 2 \sin y_\nu\), and
\(\sin y_\nu<\sin(y_\nu+\eta/2)\) since \(y_\nu<y_\nu+\eta/2<2 y_\nu\le\pi/2\).
The bound
for the above integral is simplified to
\[
\labs{\int_{y_{\nu}+\eta}^{\pi/2}
\phi(y)e\Lr{G_{\nu}(y)}\dy}\le \dfrac{\pi}{N\ga\eta}.
\]
A trivial bound for the remaining integral yields
\[
\labs{\int_{y_{\nu}-\eta}^{y_\nu+\eta}
\vphi(y)e\Lr{G_{\nu}(y)}\dy}\le 2\eta.
\]
Summing up all the above estimates, we obtain
\begin{equation}\label{eq:basicsum}
\labs{\int_0^{\pi/2}
\vphi(y)e\Lr{G_{\nu}(y)}\dy}\le\dfrac{3\pi}{2N\ga\eta}+2\eta.
\end{equation}
Taking $\eta=(N\ga)^{-1/2}\sin y_{\nu}$, which is less than $y_\nu$, we obtain
\[
\labs{\int_0^{\pi/2}
\vphi(y)e\Lr{G_{\nu}(y)}\dy}\le \dfrac{3\pi}{2\sqrt{N \ga}\sin y_\nu}
+\dfrac{2\sin y_\nu}{\sqrt{N \ga} } \le\frac{3\pi}{\sqrt{N \ga}\sin y_\nu} .
\]

Now, if \(y_\nu \in (\pi/4,\pi/2]\), we would require that \(\eta\in (0, \pi/2-y_\nu)\).
We will have \(\sin(y_\nu+\eta)>\sin y_\nu\) since $y_\nu<y_\nu+\eta/2<y_\nu+\eta\le\pi/2$, and
\(\sin(y_\nu+\eta)\le 1\le\sqrt{2}\sin y_\nu\) since \(\sin y_\nu \ge 1/\sqrt2\). We bound the second integral as
\[
\labs{\int_{y_{\nu}+\eta}^{\pi/2}
\vphi(y)e\Lr{G_{\nu}(y)}\dy}\le \dfrac{\sqrt2\pi}{N\ga \eta},
\]
This yields
\[
\labs{\int_0^{\pi/2}
\vphi(y)e\Lr{G_{\nu}(y)}\dy}\le\dfrac{(1+\sqrt2)\pi}{2N\ga\eta}+2\eta.
\]
In this case, we might choose \(\eta=\dfrac12(N\ga)^{-1/2}\)  provided that
\begin{equation}\label{eq:cond}
\eta<\pi/2-y_\nu.
\end{equation}
With such choice of $\eta$, we have
\[
\labs{\int_0^{\pi/2}
\vphi(y)e\Lr{G_{\nu}(y)}\dy}\le\dfrac{3\pi}{\sqrt{N\ga}}.
\]
This choice of $\eta$ is feasible since
\[
\nu-\varrho\ge 1/2>(1/2)\sqrt{\ga/N}=\ga\eta,
\]
which yields \(\eta<\cos y_\nu\le\pi/2-y_\nu\), this  gives~\eqref{eq:cond} and completes the proof.
%\qed
\section{Proof of Lemma~\ref{lemma:maxorder1}}
\begin{proof}
There exists $y_\nu\in(0,\pi/2)$ such that $F_\nu(y_\nu)=0$, and
\[
\cos y_\nu=\dfrac{\nu-\varrho}{\ga}=\dfrac{\ga-\fraction{\ga+\varrho}}{\ga}.
\]
Using the elementary inequality,
\begin{equation}\label{eq:cos}
1-\dfrac{2x}{\pi}\le\cos x\le 1-\dfrac{x^2}{\pi}\qquad x\in[0,\pi/2].
\end{equation}
we obtain
\begin{equation}\label{eq:zero}
\dfrac{\pi}2\dfrac{\fraction{\ga+\varrho}}{\ga}\le y_\nu\le\sqrt{\dfrac{\pi\fraction{\ga+\varrho}}\ga}.
\end{equation}
Proceeding along the same line that leads to~\eqref{eq:basicsum}, we get for any
$\eta\in(0,y_\nu)$,
\[
\labs{\int_0^{\pi/2}\varphi(y)e(G_\nu(y))\dy}\le\dfrac{3\pi}{N\ga\eta}+2\eta.
\]
We take $\eta=(N\ga)^{-1/2}$, which yields
\[
\labs{\int_0^{\pi/2}\varphi(y)e(G_\nu(y))\dy}\le
\dfrac{3\pi+2}{\sqrt{N\ga}}\le\dfrac{4\pi}{\sqrt{N\ga}}.
\]
If $\fraction{\ga+\varrho}$ satisfies
\[
\dfrac{\fraction{\ga+\varrho}}{\ga}>\dfrac{2}{\pi}(N\ga)^{-1/2},
\]
then using the left hand side of~\eqref {eq:zero}, we obtain $\eta\in(0,y_\nu)$.

On the other hand, if
\[
\dfrac{\fraction{\ga+\varrho}}{\ga}\le\dfrac2{\pi}(N\ga)^{-1/2},
\]
then letting $\del=2(N\ga)^{-1/4}$, we have
\begin{equation}\label{eq:cond1}
\dfrac{\fraction{\ga+\varrho}}{\ga}\le\dfrac{\del^2}{2\pi}\le\dfrac{1-\cos\del}2,
\end{equation}
where we have used the right hand side inequality
of~\eqref{eq:cos}. It follows from
the above inequality and the right hand side of~\eqref{eq:zero} that
\(
y_\nu\le\del/\sqrt2<\del.
\)
Using Lemma~\ref{lemma:firsttest} again
with $\lam=\abs{F_\nu(\del)}$, we get
\[
\labs{\int_{\del}^{\pi/2}\varphi(y)e(G_\nu(y))\dy}\le
\dfrac{1}{\pi\abs{F_\nu(\del)}}.
\]
We estimate the contribution of the complementary portion of the integral trivially:
\[
\labs{\int_0^{\del}\varphi(y)e(G_\nu(y))\dy}\le\int_0^\del y\dy=\dfrac{\del^2}2.
\]
On adding the above estimates we deduce that
\[
\labs{\int_0^{\pi/2}\varphi(y)e(G_\nu(y))\dy}\le
\dfrac{\del^2}2+\dfrac{1}{\pi\abs{F_\nu(\del)}}.
\]
A direct calculation gives
\[
\abs{F_\nu(\del)}=\dfrac{N\ga}\pi\dfrac{\cos y_\nu-\cos\del}{\sin\del}
=\dfrac{N\ga}\pi\dfrac{1-\cos\del-\fraction{\ga+\varrho}/\ga}{\sin\del}.
\]
Using~\eqref{eq:cond1}, we obtain
\[
\abs{F_\nu(\del)}\ge\dfrac{N\ga}{2\pi}\dfrac{1-\cos\del}{\sin\del}
=\dfrac{N\ga}{2\pi}\tan\dfrac{\del}2\ge\dfrac{N\ga\del}{4\pi}.
\]
This gives
\[
\labs{\int_0^{\pi/2}\varphi(y)e(G_{\nu}(y))\dy}
\le\dfrac{\del^2}2+\dfrac{4}{N\ga\del}= 2(N\ga)^{-1/2}+2(N\ga)^{-3/4}\le 4(N\ga)^{-1/2}.
\]
Finally, we have
\[
\labs{\int_0^{\pi/2}\varphi(y)e(G_{\nu}(y))\dy}
\le\max(4\pi(N\ga)^{-1/2},4(N\ga)^{-1/2})
=4\pi(N\ga)^{-1/2}.
\]
\end{proof}
\end{appendix}
\bibliographystyle{siam}
\bibliography{dyn}
\end{document}